\documentclass[11pt,thmsa]{article}%
\usepackage{amsmath}
\usepackage{amsfonts}
\usepackage{amssymb}
\usepackage{graphicx}%
\topmargin -0.8cm
\textwidth 14.5cm
\textheight 22cm
\newtheorem{theorem}{Theorem}[section]

\newtheorem{corollary}[theorem]{Corollary}

\newtheorem{lemma}[theorem]{Lemma}

\newtheorem{proposition}[theorem]{Proposition}

\newenvironment{proof}[1][Proof]{\noindent\textbf{#1.} }{\ \rule{0.5em}{0.5em}}

\begin{document}
\title{Geodesic orbit spheres and constant curvature in Finsler geometry
\thanks{Supported by NSFC (no. 11771331) and NSF of Beijing (no. 1182006)}}
\author{Ming Xu\\
\\
School of Mathematical Sciences\\
Capital Normal University\\
Beijing 100048, P. R. China\\
Email: mgmgmgxu@163.com
\\
}
\date{}
\maketitle

\begin{abstract}
In this paper, we generalize the classification of geodesic orbit spheres from Riemannian geometry to Finsler geometry. Then we further prove if a geodesic orbit Finsler sphere has constant flag curvature,
it must be Randers. It provides an alternative proof for the classification
of invariant Finsler metrics with $K\equiv1$ on homogeneous spheres other
than $Sp(n)/Sp(n-1)$.

\textbf{Mathematics Subject Classification (2000)}: 22E46, 53C22, 53C60

\textbf{Key words}: geodesic orbit metric, homogeneous Finsler sphere, constant flag curvature, navigation

\end{abstract}

\section{Introduction}
A Riemannian homogeneous manifold is called a {\it geodesic orbit space}, if any geodesic is the orbit of a one-parameter subgroup of isometries.
This notion was introduced by
by O. Kowalski and L. Vanhecke in 1991 \cite{KV1991}, as a
generalization of naturally reductive homogeneity.
Since then, there have been many research works on this subject. See \cite{AA2007}\cite{AN2009}\cite{AV1999}\cite{BN2018}\cite{DKN}\cite{Go1996}\cite{GN2018}
for example.

Meanwhile, geodesic orbit metrics have also been studied in Finsler geometry. In \cite{YD2014}, the notion of geodesic orbit Finsler space was defined, and in \cite{XD2017}, the
interaction between geodesic orbit property and negative curvature property was explored.

The first purpose of the paper is to generalize Yu.G. Nikonorov's classification of geodesic orbit metrics on spheres
\cite{Ni2013} to Finsler geometry, and prove the following theorem.

\begin{theorem}\label{main-thm-1}
A homogeneous Finsler metric $F$ on a sphere $M=S^n$ with $n>1$ is a geodesic orbit metric
iff the connected isometry group $I_o(M,F)$ is not isomorphic to $Sp(k)$ for any $k\geq 1$.
\end{theorem}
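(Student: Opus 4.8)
The plan is to reduce the Finsler geodesic orbit (g.o.) condition to Yu.G. Nikonorov's Riemannian classification \cite{Ni2013} by a pointwise computation with the fundamental tensor, and to isolate the single obstruction inside the symplectic tower. First I would invoke the classification of transitive actions on spheres: since $(M,F)$ is homogeneous, $G:=I_o(M,F)$ acts transitively on $S^n$, so $M=G/H$ for one of the standard pairs, with $G$ equal to $SO(n+1)$, $SU(m+1)$, $U(m+1)$, $Sp(m+1)$, $Sp(m+1)U(1)$, $Sp(m+1)Sp(1)$, $G_2$, $Spin(7)$ or $Spin(9)$. Fix an $\mathrm{Ad}(G)$-invariant inner product $Q$ on $\mathfrak{g}$, the reductive decomposition $\mathfrak{g}=\mathfrak{h}\oplus\mathfrak{m}$, and recall the Finsler geodesic lemma \cite{YD2014}: $F$ is g.o. iff for every $X\in\mathfrak{m}\setminus\{0\}$ there is $Z\in\mathfrak{h}$ with $g_X(\mathrm{pr}_{\mathfrak{m}}[X+Z,Y],X)=0$ for all $Y\in\mathfrak{m}$, where $g_X$ is the fundamental tensor of $F$ at $X$ and $\mathrm{pr}_{\mathfrak{m}}$ is the projection to $\mathfrak{m}$.

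For the direction $G\neq Sp(k)\Rightarrow F$ g.o., the key is Euler's identity $g_X(U,X)=\tfrac12\,dF^2_X(U)$, which lets me write $g_X(U,X)=Q(U,\widehat{X})$ with $\widehat{X}:=\tfrac12\nabla^QF^2(X)$. For each of these groups the isotropy module splits into pairwise inequivalent real irreducible summands $V_i$ together with at most a one-dimensional trivial summand $\mathbb{R}e$, so the invariant norm is a function of the summand norms $|v_i|$ and of the linear coordinate $z$ along $e$; hence $\widehat{X}=\sum_i\lambda_i(X)\,X_i+c(X)\,e$ with $\lambda_i>0$. Using $\mathrm{Ad}$-invariance of $Q$ and $Q(\mathfrak{h},\mathfrak{m})=0$, one rewrites $g_X(\mathrm{pr}_{\mathfrak{m}}[X+Z,Y],X)=-Q\big(Y,\mathrm{pr}_{\mathfrak{m}}[X+Z,\widehat{X}]\big)$, so the g.o. equation at $X$ collapses to the purely algebraic condition $\mathrm{pr}_{\mathfrak{m}}[X+Z,\widehat{X}]=0$ — exactly the shape of the Riemannian g.o. equation for the invariant metric $\sum_i\mu_iQ|_{V_i}+\mu_zQ|_{\mathbb{R}e}$ with the $X$-dependent coefficients $\mu_i=\lambda_i$, $\mu_z=c/z_0$. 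Since $G\neq Sp(k)$, the auxiliary metric still has isometry group $\supseteq G$ not isomorphic to $Sp(k)$, so by \cite{Ni2013} it is g.o.; thus the algebraic equation is solvable at $X$, hence $F$ is g.o.

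For the converse I argue by contraposition: assume $G=Sp(k)$, i.e. $M=Sp(m+1)/Sp(m)=S^{4m+3}$ with $k=m+1$. Here $\mathfrak{m}=V_1\oplus\mathfrak{z}$ with $V_1\cong\mathbb{H}^m$ irreducible and $\mathfrak{z}\cong\mathfrak{sp}(1)=\mathrm{Im}\,\mathbb{H}$ a three-dimensional $\mathrm{Ad}(Sp(m))$-trivial summand that is simultaneously a subalgebra. For $X\in\mathfrak{z}$ one has $[\mathfrak{h},X]=0$ and $\widehat{X}\in\mathfrak{z}$, so the g.o. equation reduces to $[X,\widehat{X}]=0$ in $\mathfrak{sp}(1)$, i.e. $\widehat{X}\parallel X$ for the restricted Minkowski norm $N:=F|_{\mathfrak{z}}$ on $\mathbb{R}^3$; this holds for all $X$ iff $N$ is round. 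But $I_o(M,F)=Sp(m+1)$ forces $N$ to be non-round, for otherwise $F=\Phi(|v_1|,|v_z|)$ would also be invariant under the $Sp(1)$ acting as $SO(3)$ on $\mathfrak{z}$ and preserving $|v_1|$ on $V_1$, enlarging the isometry group to $Sp(m+1)Sp(1)$. Hence there are directions $X\in\mathfrak{z}$ with $\nabla^QN^2(X)\not\parallel X$, the g.o. equation fails, and $F$ is not g.o.; the case $m=0$ (a left-invariant non-round norm on $Sp(1)=S^3$, where $\mathfrak{z}=\mathfrak{m}$) is identical.

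The main obstacle is the fundamental-tensor reduction along the non-reversible (Randers-type) directions attached to the trivial summand: there $c(X)\,e$ is the gradient of a linear coordinate and need not be a positive multiple of the $\mathfrak{z}$-component $z_0e$ of $X$, so the matching $\mu_z=c/z_0$ to a genuine positive-definite invariant Riemannian metric breaks down when $z_0=0$ or when the signs clash, precisely the regime where Finsler and Riemannian g.o. behaviour could diverge. I expect to resolve this either by a direct solvability analysis of $\mathrm{pr}_{\mathfrak{m}}[X+Z,\widehat{X}]=0$ on $V_1$, using that $\mathrm{pr}_{\mathfrak{m}}[X_1,e]$ lies in the isotropy orbit $\mathfrak{h}\cdot X_1$, or by realizing the deformation along $e$ as Zermelo navigation by the Killing field of the $\mathrm{Ad}(H)$-fixed vector $e$ and using that navigation by such a field preserves the g.o. property; closedness of the geodesic-vector condition then fills in the remaining lower-dimensional loci.
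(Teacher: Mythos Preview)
Your ``only if'' argument contains a real gap. From the g.o.\ condition at $X\in\mathfrak{z}$ you correctly deduce $\widehat{X}\parallel X$, hence $N=F|_{\mathfrak{z}}$ is round; but the step ``$N$ round $\Rightarrow F=\Phi(|v_1|,|v_z|)$'' is false. The $\mathrm{Ad}(Sp(m))$-invariance only gives $F(v_1,z)=\Phi(|v_1|,z)$ with $z\in\mathbb{R}^3$ free, and roundness of $N$ constrains only the boundary slice $\Phi(0,\cdot)$; for $|v_1|>0$ the dependence of $\Phi$ on $z$ is unrestricted, so there are $Sp(m+1)$-invariant Minkowski norms with $N$ round that are \emph{not} $Sp(m+1)Sp(1)$-invariant. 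Thus you have not excluded that such an $F$ is g.o., and the contradiction does not close. The paper handles this direction by a different device: it shows (Lemma~\ref{lemma-1}) that on any $G$-g.o.\ space every vector field commuting with $\mathfrak{g}$ is automatically Killing; applied to the right $\mathfrak{sp}(1)$ commuting with $Sp(m+1)$, this forces $I_o(M,F)\supseteq Sp(m+1)Sp(1)$, the desired contradiction. To repair your route you would have to exploit the g.o.\ equation at mixed vectors $X=v_1+z$, not only at $X\in\mathfrak{z}$, and that is precisely what Lemma~\ref{lemma-1} packages.

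For the ``if'' direction your gradient reduction $g_X(\mathrm{pr}_{\mathfrak{m}}[X+Z,Y],X)=-Q\big(Y,\mathrm{pr}_{\mathfrak{m}}[X+Z,\widehat{X}]\big)$ is correct and is a genuinely different idea from the paper's case-by-case spray-vector computation; it dispatches the isotropy-irreducible and weakly symmetric spheres in one stroke via Nikonorov. But the obstacle you flag at the one-dimensional trivial summand is not cosmetic: for $G=U(m)$ or $Sp(m)U(1)$ the auxiliary bilinear form with coefficient $c(X)/z_0$ on $\mathbb{R}e$ need not be positive (or even defined), so Nikonorov's Riemannian theorem does not apply and the pointwise solvability of $\mathrm{pr}_{\mathfrak{m}}[X+Z,\widehat{X}]=0$ must be argued directly. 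Your proposed fix ``$\mathrm{pr}_{\mathfrak{m}}[X_1,e]\in[\mathfrak{h},X_1]$'' is the right observation (for $U(m)$ it holds because the center of $\mathfrak{u}(m-1)$ acts on $V_1$ by the same scalar as $e$), but once you carry it out for each of $U(m)$ and $Sp(m)U(1)$ you are doing essentially the same bracket computation the paper performs with the spray vector field, and the Nikonorov shortcut has evaporated exactly where it was needed.
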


By this theorem, we can easily
list all the geodesics orbit metrics on spheres:
\begin{description}
\item{\rm (1)} Riemannian metrics of constant curvature.
\item{\rm (2)} All homogeneous Finsler metrics on
$S^{4n-1}=Sp(n)Sp(1)/Sp(n-1)Sp(1)$ with $n>1$ and $S^{15}=Spin(9)/Spin(7)$. They are all of $(\alpha_1,\alpha_2)$-type, in which some special ones are Riemannian.
\item{\rm (3)} All homogeneous Finsler metrics on $SU(n)/SU(n-1)$ with $n>2$ and $U(n)/U(n-1)$ with
    $n>1$. They are all of $(\alpha,\beta)$-type, in which some special ones are Riemannian.
\item{\rm (4)} All homogeneous Finsler metrics on
$Sp(n)U(1)/Sp(n-1)U(1)$. They are all of $(\alpha_1,\alpha_2,\beta)$-type, in which some special ones are of $(\alpha,\beta)$-type, $(\alpha_1,\alpha_2)$-type, or Riemannian.
\end{description}

See Section 2 for the notions of these metrics. When the metrics are Riemannian,
the above list re-verifies Table 1 in \cite{Ni2013}. For each case of (2)-(4),
the space of geodesic orbit metrics has an infinite dimension.
In an independent work \cite{ZD2018-preprint},
S. Zhang and S. Deng classified geodesic orbit Randers spheres with a more algebraic method, and described their geodesic vectors.

The second purpose of this paper is to apply Theorem \ref{main-thm-1} to
homogeneous Finsler spheres of constant flag curvature $K\equiv 1$, and
explore the interaction between geodesic orbit property and constant positive curvature property. We will prove the following theorem.

\begin{theorem}\label{main-thm-2}
A homogeneous Finsler sphere $(M,F)=(S^n,F)$ with $n>1$ and $K\equiv 1$ is a geodesic orbit
space iff it is Randers.
\end{theorem}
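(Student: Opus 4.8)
The plan is to prove the two implications separately: for the direction Randers $\Rightarrow$ geodesic orbit I would use navigation together with Theorem~\ref{main-thm-1}, and for the converse I would combine Theorem~\ref{main-thm-1} with the rigidity of constant flag curvature under the special metric types appearing in the list (1)--(4).

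Suppose first that $F$ is a Randers metric on $S^n$ with $K\equiv 1$. By the Bao--Robles--Shen description, such an $F$ is the navigation of the round unit sphere $(S^n,h)$ by a Killing field $W$ with $\|W\|_h<1$; writing $W$ as an element $\xi\in\mathfrak{so}(n+1)$, the isometries of $F$ are precisely those of $h$ fixing $W$, so that $I_o(M,F)$ is the identity component of the centralizer $Z_{SO(n+1)}(\xi)$. Every element of this centralizer commutes with the whole flow $\exp(t\xi)$, hence the closed subgroup $T=\overline{\{\exp(t\xi):t\in\mathbb{R}\}}$ is a torus lying in the center of $I_o(M,F)$. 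If $W\neq 0$ this central torus is positive-dimensional while every $Sp(k)$ has finite center, so $I_o(M,F)\not\cong Sp(k)$ and Theorem~\ref{main-thm-1} yields the geodesic orbit property; if $W=0$ then $F=h$ is the round sphere, a symmetric space and hence geodesic orbit.

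Conversely, let $(S^n,F)$ be geodesic orbit with $K\equiv 1$. Theorem~\ref{main-thm-1} rules out $I_o(M,F)\cong Sp(k)$, so $(S^n,F)$ is one of the spaces in (1)--(4): a constant curvature Riemannian sphere, or an invariant metric of $(\alpha_1,\alpha_2)$-, $(\alpha,\beta)$-, or $(\alpha_1,\alpha_2,\beta)$-type on one of the listed cosets. In case (1) the condition $K\equiv 1$ gives the round sphere, which is Randers. For the remaining cases I would substitute the invariant metric into the flag curvature formula for homogeneous Finsler spaces, expressed through the reductive decomposition $\mathfrak{g}=\mathfrak{h}\oplus\mathfrak{m}$ and the brackets among the isotropy summands, and impose $K\equiv 1$ in every flag at the base point. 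On the $(\alpha_1,\alpha_2)$-spaces $Sp(n)Sp(1)/Sp(n-1)Sp(1)$ and $Spin(9)/Spin(7)$ the two isotropy summands scale independently, and imposing constancy of the flag curvature in all directions should force $F$ back to the round Riemannian metric. On the $(\alpha,\beta)$- and $(\alpha_1,\alpha_2,\beta)$-spaces the same constancy, together with the known rigidity that a non-Riemannian $(\alpha,\beta)$-metric of constant flag curvature is of Randers type, should force the surviving parameters to a Randers metric.

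The hard part will be this curvature rigidity for the genuinely non-Randers types, and especially for the $(\alpha_1,\alpha_2,\beta)$-metrics on $Sp(n)U(1)/Sp(n-1)U(1)$, where three invariant parameters interact and the algebraic system produced by $K\equiv 1$ is most involved. The most efficient route through this obstacle is to re-use the geodesic orbit property itself: writing each geodesic as an orbit $\exp(tX)o$ lets one read off the navigation data of $F$ directly, so that the goal reduces to recognizing $F$ as the navigation of the round sphere by a Killing field, which by the Bao--Robles--Shen criterion is exactly the assertion that $F$ is Randers.
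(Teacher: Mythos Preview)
Your ``if'' direction is correct and in fact cleaner than you may realize: identifying $I_o(M,F)$ with the identity component of the centralizer of $\xi$ in $SO(n+1)$ and noting that this has a positive-dimensional center (hence cannot be $Sp(k)$) is a valid route to Theorem~\ref{main-thm-1}. The paper instead argues directly from Proposition~\ref{killing-navigation-proposition-1} that Killing navigation sends homogeneous geodesics to homogeneous geodesics, bypassing Theorem~\ref{main-thm-1} altogether; either way works.

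The ``only if'' direction, however, is not a proof but a program with an unfilled core. Two concrete problems:

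\begin{itemize}
\item For the $(\alpha_1,\alpha_2)$-cases $Sp(n)Sp(1)/Sp(n-1)Sp(1)$ and $Spin(9)/Spin(7)$ you propose to force the metric back to round by imposing $K\equiv1$ in the homogeneous flag curvature formula. You never carry this out, and doing so is exactly the computational route via Huang's formula that the paper deliberately avoids. The paper's actual argument here is one line: these cosets are weakly symmetric, so every invariant metric is \emph{reversible}, and then the Kim--Min theorem (Theorem~\ref{reversible-constant-flag-curvature}: a reversible Finsler sphere with $K\equiv1$ is Riemannian) finishes immediately. You never invoke reversibility or Kim--Min, and without them there is no mechanism in your sketch that distinguishes Randers from general $(\alpha_1,\alpha_2)$.
\item For the $(\alpha,\beta)$- and $(\alpha_1,\alpha_2,\beta)$-cases you appeal to a ``known rigidity that a non-Riemannian $(\alpha,\beta)$-metric of constant flag curvature is of Randers type''. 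No such blanket theorem is available in the generality you need, and for $(\alpha_1,\alpha_2,\beta)$-metrics nothing of the kind exists in the literature. Your fallback---``write each geodesic as an orbit and read off the navigation datum''---is not an argument: the geodesic orbit property tells you each geodesic is $\exp(tX)\cdot o$ for some $X$, but gives no reason why $F$ itself should be the navigation of a Riemannian metric.
\end{itemize}

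What the paper actually does in the remaining cases ($I_o(M,F)=U(m)$ or $Sp(m)U(1)$, which have a one-dimensional center) is the key idea you are missing. Take a central Killing field $V$; its integral curves are closed geodesics, and one can show $\lambda_-\le 2\pi$ for the shorter prime length using the antipodal map $\psi$. Then perform a Killing navigation by $\epsilon V$ and tune $\epsilon$ so that for the new metric $\tilde F$ this length becomes exactly $2\pi$, which forces the antipodal map to satisfy $\tilde\psi^2=\mathrm{id}$. Now use the geodesic orbit property of $\tilde F$ together with $\tilde\psi^2=\mathrm{id}$ to prove $\tilde F$ is reversible (this is the delicate Lemma at the end of Section~6, arguing via distances $\pi/k$ and bisection). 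Kim--Min then makes $\tilde F$ Riemannian, and undoing the navigation shows $F$ is Randers. The heart of the proof is thus reversibility plus Kim--Min, reached through a navigation adjustment---none of which appears in your proposal.
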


All Randers spheres $(M,F)=(S^n,F)$ with $n>1$ and $K\equiv 1$ are classified by D. Bao, Z. Shen and C. Robles \cite{BRS2004}, i.e. the metric $F$ must be defined by the navigation datum $(h,W)$, in which $h$ is the
Riemannian metric for the unit sphere, and $W$ is a Killing vector field with $h(W,W)<1$ everywhere. The only new ingredient is that
$F$ is homogeneous iff $W$ has a constant $h$-length.
So we have the following corollary of Theorem \ref{main-thm-1} and Theorem \ref{main-thm-2},
\begin{corollary}\label{cor-classification}
Any invariant Finsler metric $F$ on a homogeneous sphere $M$ with $\dim M>1$, $K\equiv 1$ and $I_o(M,F)\neq Sp(k)$ for all $k\in\mathbb{N}$, i.e. $M\neq Sp(n)/Sp(n-1)$ for all $n>1$
or $Sp(1)/Sp(0)=SU(2)/\{e\}$ in the list
(\ref{list-homo-sphere}) of homogeneous spheres, then $F$ is a Randers metric defined by the navigation
datum $(h,W)$ in which $h$ is the Riemannian metric for the unit sphere and $W$ is a Killing vector field
of constant length on $(M,h)$.
\end{corollary}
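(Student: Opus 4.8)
\noindent The plan is to concatenate the two main theorems with the navigation classification of Bao, Shen and Robles, and then to read off the constant-length condition from homogeneity. First I would note that the standing hypothesis $I_o(M,F)\neq Sp(k)$ for every $k\in\mathbb{N}$ is exactly the condition appearing in Theorem \ref{main-thm-1}; since $\dim M=n>1$, that theorem immediately yields that $F$ is a geodesic orbit metric. Substituting this into Theorem \ref{main-thm-2}, together with the hypothesis $K\equiv 1$, forces $F$ to be of Randers type.

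Next I would apply the classification of Randers spheres with $K\equiv 1$ from \cite{BRS2004}: any such metric arises from a navigation datum $(h,W)$, where $h$ is the round Riemannian metric on $S^n$ and $W$ is an $h$-Killing field satisfying $h(W,W)<1$ everywhere. This already produces the pair $(h,W)$ asserted by the corollary, so the only gap left to close is the upgrade from ``$W$ is Killing'' to ``$W$ is Killing of constant $h$-length''.

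Closing this gap is the genuinely new step, and I expect it to be the main point requiring care. The key input is the navigation description of the isometry group: a diffeomorphism $\phi$ is an isometry of $F=(h,W)$ if and only if $\phi$ is an isometry of $h$ with $\phi_*W=W$. Indeed, $\phi$ must map the indicatrix of $F$ to itself, and since that indicatrix is the $h$-unit sphere translated by $W$, both $h$ (its shape) and $W$ (its displacement) are recovered from $F$ and hence preserved. Granting this, every $\phi\in I_o(M,F)$ preserves the smooth function $p\mapsto h_p(W_p,W_p)$, because $h_{\phi(p)}(W_{\phi(p)},W_{\phi(p)})=h_{\phi(p)}(\phi_*W_p,\phi_*W_p)=h_p(W_p,W_p)$. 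Since $F$ is homogeneous, $I_o(M,F)$ acts transitively on $M$, so this invariant function must be constant; that is, $W$ has constant $h$-length, which is the assertion.

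Finally, although it is not needed for the corollary, the reverse implication in the ``iff'' mentioned above---and hence the non-emptiness of this class---follows by a short computation: a nonzero constant-length Killing field $W$ on the round $S^n$ forces $n$ odd and corresponds to a skew-symmetric matrix $A$ with $A^2$ a negative multiple of the identity, so its flow is a Hopf-type circle action whose centralizer in $SO(n+1)$ is a unitary group acting transitively on $S^n$; hence $F$ is homogeneous (the case $W\equiv 0$ giving the round metric).
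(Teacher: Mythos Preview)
Your proposal is correct and follows the same route the paper indicates: apply Theorem~\ref{main-thm-1} to obtain the geodesic orbit property, then Theorem~\ref{main-thm-2} to conclude Randers, then the Bao--Robles--Shen navigation description, and finally use homogeneity to force $W$ to have constant $h$-length. The paper treats the corollary as an immediate consequence and merely states that ``$F$ is homogeneous iff $W$ has a constant $h$-length'' without argument; your indicatrix argument for why every isometry of $F$ preserves both $h$ and $W$ (and hence the function $h(W,W)$) supplies the detail the paper omits.
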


Corollary \ref{cor-classification} has classified all the invariant Finsler
metrics with constant flag curvature on a homogeneous sphere, except for the most difficult case $M=Sp(n)/Sp(n-1)$. It implies a negative answer to the question if a homogeneous Finsler metric of constant flag curvature can be "exotic". Global homogeneity for the metric is a critical condition, because in the non-homogeneous situation, we know the examples discovered by R.L. Bryant
\cite{Br1996}\cite{Br1997}\cite{Br2002}, and there may exist many more.

The above theorems and corollary can also be applied to study a homogeneous Finsler sphere $(M,F)$ with
$K\equiv1$ and finite orbits of prime closed geodesics \cite{Xu2018-preprint}. By Theorem \ref{main-thm-2} in this paper and that in \cite{Xu2018-preprint}, we may find totally geodesic sub-manifolds
of $M$ which are Randers spheres.

By private communication, the author noticed that L. Huang had discovered Corollary \ref{cor-classification} in 2015, and found a computational proof
based on his homogeneous flag curvature formula \cite{Hu2015}\cite{Hu2017}. The method in this paper
is more geometrical, and has not used any calculation concerning L. Huang formula.
A Killing navigation process has been applied to reduce our discussion
to the case that $(M,F)$ is a geodesic orbit sphere with $K\equiv1$, all geodesics are closed, and all
prime closed geodesics have the same length $2\pi$. Then we use the geodesic orbit property to prove
the reversibility, and finally we apply a theorem of C. Kim and K. Min, \cite{KM2009} (see Theorem \ref{reversible-constant-flag-curvature}) to finish the proof.

This paper is organized as following. In Section 2, we summarize some basic knowledge in Finsler geometry
and introduce the notions for some special classes of metrics. In Section 3, we review the concepts and some fundamental properties of geodesic orbit metrics and weakly symmetric spaces. In Section 4, we prove
Theorem \ref{main-thm-1}. In Section 5, we introduce the technique of Killing navigation process and two propositions for it. In Section 6, we prove Theorem \ref{main-thm-2}.

{\bf Acknowledgement.} The author would like to thank sincerely Chern Institute of Mathematics, Nankai University and Shaoqiang Deng for the hospitality during the preparation for this paper. The author also
thanks Yuri G. Nikonorov, Libing Huang, Shaoqiang Deng, Zhongmin Shen and Huaifu Liu for helpful discussions and suggestions.

\section{Finsler metric and some examples}

A {\it Finsler metric} on a smooth manifold $M$ with $\dim M=n$
is a continuous function $F:TM\rightarrow [0,+\infty)$, satisfying
the following properties \cite{CS2005}:
\begin{description}
\item{\rm (1)} The restriction of $F$ to the slit tangent bundle $TM\backslash
0$ is a positive smooth function.
\item{\rm (2)} For any $\lambda\geq 0$,
$F(x,\lambda y)=\lambda F(x,y)$.
\item{\rm (3)} For any local coordinates $x=(x^i)\in M$
and $y=y^j\partial_{x^j}\in T_xM$, the Hessian matrix
$$(g_{ij}(x,y))=\left(\frac12[F^2(x,y)]_{y^iy^j}\right)$$
is positive definite.
\end{description}

We will also call $(M,F)$ a {\it Finsler manifold} or a {\it Finsler space}.
The restriction of $F$ to each tangent space $T_xM$ is called a {\it Minkowski norm}.
Minkowski norm can be similarly defined on any real linear space.
If a Finsler metric $F$ satisfies $F(x,y)=F(x,-y)$ for any $x\in T_xM$ and
$y\in T_xM$, we call $F$ {\it reversible}.

Here are some examples.

A Finsler metric $F$ is {\it Riemannian}, if for any standard
local coordinates, the Hessian matrix $(g_{ij}(x,y))$ depends only on the $x$-variables.

The most simple and the most important non-Riemannian Finsler metrics are {\it Randers metrics},
which are of the form $F=\alpha+\beta$, in which $\alpha$ is a Riemannian metric and $\beta$ is
a one-form.

The {\it $(\alpha,\beta)$-metrics}
are of the form $F=\alpha\phi(\beta/\alpha)$, in which $\phi$ is a positive smooth function, and
the pair $\alpha$ and $\beta$ are similar to those for Randers metrics.

Applying similar thoughts, we can construct more Finsler metrics.

For example,
the $(\alpha_1,\alpha_2)$-metrics are defined in \cite{DX2016}.
Let $\alpha$ be a Riemannian metric on $M$, and
$TM=\mathcal{V}_1+\mathcal{V}_2$ is an $\alpha$-orthogonal bundle decomposition. Denote $\alpha_i=\alpha|_{\mathcal{V}_i}$.
Then a Finsler metric $F$ is called an {\it $(\alpha_1,\alpha_2)$-metric} if
$F(x,y)=f(\alpha_1(x,y_1),\alpha_2(x,y_2))$ for some fixed function $f$, where $y=y_1+y_2\in T_xM$ with
$y_i\in\mathcal{V}_i$ respectively.

In later discussion, we will meet the following {\it $(\alpha_1,\alpha_2,\beta)$-metrics}, which
have a combined feature from both $(\alpha,\beta)$-metrics and $(\alpha_1,\alpha_2)$-metrics. Let
$\alpha$ be a Riemannian metric and $TM=\mathcal{V}_0+\mathcal{V}_1+\mathcal{V}_2$ an $\alpha$-orthogonal bundle decomposition such that the dimension of the fibers of $\mathcal{V}_0$ are one dimensional. Denote $\beta$ a one-form with $\mathrm{\ker}\beta=\mathcal{V}_1+\mathcal{V}_2$, and $\alpha_i=\alpha|_{\mathcal{V}_i}$ for $i=1$ and 2. Then a Finsler metric $F$ is called an {\it $(\alpha_1,\alpha_2,\beta)$-metric}
if $F(x,y)=f(\beta(x,y_0),\alpha_1(x,y_1),\alpha_2(x,y_2))$ for some fixed function $f$, where $y=y_0+y_1+y_2$
with $y_i\in\mathcal{V}_i$ respectively.

In homogeneous Finsler geometry, we consider a smooth coset space $G/H$ such that
$H$ is a compact subgroup of $G$, and there is a {\it reductive decomposition}
$\mathfrak{g}=\mathfrak{h}+\mathfrak{m}$ which identifies the tangent space at $o=eH$ with the
$\mathrm{Ad}(H)$-invariant subspace $\mathfrak{m}$. Then any $G$-invariant Finsler metric $F$
is one-to-one determined by an $\mathrm{Ad}(H)$-invariant Minkowski norm on $\mathfrak{m}$ \cite{De2012}. Generally speaking, when
$G/H$ is connected and simply connected, the homogeneous metric $F$ is of Randers, $(\alpha,\beta)$-,
$(\alpha_1,\alpha_2)$- or $(\alpha_1,\alpha_2,\beta)$-type, iff its Minkowski norm on $\mathfrak{m}$
belongs to the same type, which can be easily detected by studying the isotropy representations
and the linear isometry group $O(\mathfrak{m},F)$.

\section{Geodesic orbit metric and weakly symmetric space}

A homogeneous Finsler space $(G/H,F)$ is called a $G$-geodesic orbit metric, if any geodesic of positive constant speed is the
orbit of the one-parameter subgroup generated by some Killing
vector field $X\in\mathfrak{g}=\mathrm{Lie}(G)$. If the group
$G$ is not specified, we will assume $G=I_o(M,F)$.

The following proposition provides several equivalent definitions.

\begin{proposition}\label{prop-1}
Let $(G/H,F)$ be a homogeneous Finsler space, with a reductive decomposition
$\mathfrak{g}=\mathfrak{h}+\mathfrak{m}$, and denote $[\cdot,\cdot]_\mathfrak{m}$ the
$\mathfrak{m}$-factor in the bracket operation $[\cdot,\cdot]$. Then the following are equivalent:
\begin{description}
\item{\rm (1)} $F$ is a $G$-geodesic orbit metric.
\item{\rm (2)} For any $x\in M$, and any nonzero $y\in T_xM$, we can find a Killing vector
field $X\in\mathfrak{g}$ such that $X(x)=y$ and $x$ is a critical point for the function $f(\cdot)=F(X(\cdot))$.
\item{\rm (3)} For any nonzero vector $u\in\mathfrak{m}$, there exists an $u'\in\mathfrak{h}$
such that
$$\langle u,[u+u',\mathfrak{m}]_\mathfrak{m}\rangle_u=0.$$
\item{\rm (4)} The spray vector field $\eta(\cdot):\mathfrak{m}\backslash \{0\}\rightarrow\mathfrak{m}$
is tangent to the $\mathrm{Ad}(H)$-orbits.
\end{description}
\end{proposition}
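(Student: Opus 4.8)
The plan is to prove the equivalences in the cyclic order $(1)\Leftrightarrow(2)\Leftrightarrow(3)\Leftrightarrow(4)$, using homogeneity to reduce every verification to the base point $o=eH$ and the $\mathrm{Ad}(H)$-invariant Minkowski norm on $\mathfrak{m}$.

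For $(1)\Leftrightarrow(2)$ I would invoke the standard geometric fact, a Finsler analogue of the classical Riemannian characterization, that for a Killing field $X$ with $X(x)\neq 0$ the orbit $t\mapsto\exp(tX)\cdot x$ is a constant-speed geodesic if and only if $x$ is a critical point of $f=F(X(\cdot))$. Granting this, $(1)\Rightarrow(2)$ is immediate: given $x$ and a nonzero $y\in T_xM$, the geodesic with initial data $(x,y)$ is by hypothesis an orbit of some $X$ with $X(x)=y$, so $x$ is critical for $F(X(\cdot))$. Conversely $(2)\Rightarrow(1)$ follows from uniqueness of geodesics, since the field furnished by (2) has an orbit through $x$ that is a geodesic with the prescribed initial data and hence coincides with the given geodesic.

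Next, $(2)\Leftrightarrow(3)$ is the algebraization of the critical-point condition at $o$. Because isometries carry Killing fields to Killing fields and preserve $F(X(\cdot))$, it suffices to test (2) at $x=o$; there a Killing field with $X(o)=u\in\mathfrak{m}$ has the form $X=u+u'$ with $u'\in\mathfrak{h}$. Differentiating $f=F(X(\cdot))$ at $o$ along the curves $\exp(sv)\cdot o$ with $v\in\mathfrak{m}$, and rewriting the derivative of the Killing field as a Lie bracket, one finds $df_o(v)=F(u)^{-1}\langle u,[u+u',v]_\mathfrak{m}\rangle_u$ up to sign; hence $o$ is critical in every such direction exactly when $\langle u,[u+u',\mathfrak{m}]_\mathfrak{m}\rangle_u=0$, which is (3).

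Finally, $(3)\Leftrightarrow(4)$ is where the Finsler features enter, and I expect it to be the main obstacle. Recall that the spray vector field is characterized by $\langle\eta(u),v\rangle_u=\langle u,[u,v]_\mathfrak{m}\rangle_u$ for all $v\in\mathfrak{m}$, and that the tangent space of the $\mathrm{Ad}(H)$-orbit through $u$ is $[\mathfrak{h},u]$. The crucial point is the identity $\langle u,[u',v]\rangle_u=\langle[u,u'],v\rangle_u$ for every $u'\in\mathfrak{h}$. Since $\mathrm{ad}(u')|_\mathfrak{m}$ is an infinitesimal linear isometry of the Minkowski norm, differentiating the invariance relation for the fundamental tensor gives $(D_{[u',u]}g)_u(v,u)+\langle[u',v],u\rangle_u+\langle v,[u',u]\rangle_u=0$; the first term equals $2C_u([u',u],v,u)$ and vanishes because the Cartan tensor satisfies $C_u(u,\cdot,\cdot)=0$, leaving the skew-symmetry $\langle[u',v],u\rangle_u=-\langle v,[u',u]\rangle_u$, which is the desired identity. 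With it, (3) reads $\langle\eta(u)+[u,u'],v\rangle_u=0$ for all $v$, so by nondegeneracy $\eta(u)=[u',u]\in[\mathfrak{h},u]$; conversely, writing $\eta(u)=[w,u]$ with $w\in\mathfrak{h}$ and setting $u'=-w$ recovers (3). Thus the content of $(3)\Leftrightarrow(4)$ is precisely that this Cartan-tensor identity lets one absorb the isotropy correction $u'$ into the orbit-tangency of $\eta$, taking over the role played by the automatic skew-symmetry in the Riemannian setting.
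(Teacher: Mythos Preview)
Your proof is correct and follows essentially the same route as the paper: $(1)\Leftrightarrow(2)$ via the critical-point characterization of Killing geodesics, the algebraic reduction to (3) (which the paper cites from \cite{YD2014} rather than deriving directly via $(2)\Leftrightarrow(3)$ as you do), and the key step $(3)\Leftrightarrow(4)$ via the Cartan identity $C_u(u,\cdot,\cdot)=0$. Note only that the paper's sign convention for the spray vector field is $\langle\eta(u),v\rangle_u=\langle u,[v,u]_\mathfrak{m}\rangle_u$, opposite to yours, so one obtains $\eta(u)=[u,u']$ rather than $[u',u]$, but this does not affect the argument.
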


The notion $\langle\cdot,\cdot\rangle_u$ is the inner product
defined by the Hessian $(g_{ij}(o,u))$ of $F$.
The spray vector field $\eta(\cdot)$ was defined by L. Huang
in \cite{Hu2015}, by the following equality,
$$\langle\eta(y),u\rangle_y=\langle y,[u,y]_\mathfrak{m}
\rangle_y,\quad\forall u\in\mathfrak{m}.$$

Now we sketch the proof of Proposition \ref{prop-1}.

The equivalence between (1) and (2) is guaranteed by the following lemma, which follows easily Lemma 3.1
in \cite{DX2014}.

\begin{lemma}\label{lemma-0}
Let $X$ be a Killing vector field on a Finsler space
$(M,F)$ which is non-vanishing at $x\in M$. Then the
integration curve of $X$ passing $x$ is a geodesic
iff $x$ is a critical point for the function $f(\cdot)=F(X(\cdot))$.
\end{lemma}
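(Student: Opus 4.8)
The plan is to prove Lemma~\ref{lemma-0}, since it supplies the equivalence between conditions (1) and (2), and then to sketch how the remaining equivalences in Proposition~\ref{prop-1} follow. I would set $\gamma(t)=\exp(tX)\cdot x$, the integral curve of the Killing field $X$ through $x$, noting that since $X$ is Killing its flow consists of isometries, so the speed $F(\dot\gamma(t))=F(X(\gamma(t)))$ is constant along $\gamma$; in particular $\gamma$ is already parametrized proportionally to arc length. The core analytic object is the function $f(p)=F(X(p))$ on $M$, and the claim is that $\gamma$ is a geodesic if and only if $x$ is a critical point of $f$. Because $F$ is a Finsler norm, the natural tool is the first variation of arc length, or equivalently the characterization of geodesics via the Chern (or canonical) connection $D_{\dot\gamma}\dot\gamma=0$.

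\medskip

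\noindent The key computational step is to differentiate $f$ in an arbitrary direction $v\in T_xM$ and relate $df_x(v)$ to the geodesic equation for $\gamma$. Using the $g$-orthogonality structure, one writes $df_x(v)=g_{X(x)}\bigl(X(x),\,\nabla_v X\bigr)/F(X(x))$, where the covariant derivative is taken with respect to the reference vector $X(x)=\dot\gamma(0)$. The essential identity is that for a Killing field the symmetrized covariant derivative vanishes in the appropriate Finslerian sense, so that $\nabla_v X$ pairs against $\dot\gamma$ exactly as the covariant acceleration $D_{\dot\gamma}\dot\gamma$ does. Concretely, I expect the computation to reduce $df_x(v)=0$ for all $v$ to the statement $g_{\dot\gamma}(D_{\dot\gamma}\dot\gamma,\,v)=0$ for all $v$, and since $g_{\dot\gamma}$ is nondegenerate this is equivalent to $D_{\dot\gamma}\dot\gamma=0$, i.e.\ to $\gamma$ being a geodesic.

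\medskip

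\noindent The step I expect to be the main obstacle is the correct bookkeeping of the reference vector in the Finsler setting. Unlike the Riemannian case, the inner product $g_y$ and the associated connection coefficients depend on the direction $y$, so when differentiating $f(p)=F(X(p))$ one must be careful that the linearization of $F$ at the point $X(x)$ produces precisely the $g_{X(x)}$-inner product and that the Killing condition for $X$ is invoked in its proper anisotropic form. The cleanest route is to cite the relevant structure from \cite{DX2014} (Lemma~3.1 there), from which the present lemma ``follows easily,'' rather than re-deriving the first variation formula from scratch; one then only needs to translate the condition ``$\gamma$ is a geodesic of $F$'' into ``$x$ is critical for $F\circ X$.'' Granting Lemma~\ref{lemma-0}, the equivalence (1)$\Leftrightarrow$(2) is immediate: a geodesic of positive constant speed is an orbit of a one-parameter isometry subgroup exactly when at each of its points some Killing $X$ realizes the velocity and meets the criticality condition. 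For (2)$\Leftrightarrow$(3) I would pull everything back to $o=eH$, write $X=u+u'$ with $u\in\mathfrak{m}$, $u'\in\mathfrak{h}$, so that $X(o)=u$, and unwind the criticality condition: the derivative of $f$ along the orbit directions $[\,u+u',\mathfrak{m}\,]_{\mathfrak{m}}$ must vanish, which is exactly $\langle u,[u+u',\mathfrak{m}]_{\mathfrak{m}}\rangle_u=0$. Finally (3)$\Leftrightarrow$(4) is read off from the definition of the spray vector field $\eta$: the equation $\langle\eta(y),u\rangle_y=\langle y,[u,y]_{\mathfrak{m}}\rangle_y$ shows that $\langle u,[u+u',\mathfrak{m}]_{\mathfrak{m}}\rangle_u=0$ for some $u'\in\mathfrak{h}$ says precisely that $\eta(u)$ differs from an element of $[\mathfrak{h},u]_{\mathfrak{m}}$, i.e.\ that $\eta(u)$ lies tangent to the $\mathrm{Ad}(H)$-orbit through $u$.
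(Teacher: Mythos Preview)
Your treatment of Lemma~\ref{lemma-0} aligns with the paper's: the paper offers no self-contained proof, stating only that the lemma ``follows easily [from] Lemma 3.1 in \cite{DX2014},'' and you end at the same citation after sketching the expected first-variation argument. Your sketch is a reasonable elaboration of what such a proof would look like, and your caution about the reference-vector bookkeeping is well placed; that is precisely why the paper defers to \cite{DX2014} rather than writing it out.

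The remainder of your proposal addresses Proposition~\ref{prop-1} rather than the stated lemma. For completeness: the paper handles (1)$\Leftrightarrow$(3) by citing Proposition~3.4 of \cite{YD2014}, not by the direct unwinding you describe, and for (3)$\Leftrightarrow$(4) it does \emph{not} simply read off the equivalence from the definition of $\eta$. The nontrivial step there is the identity $\langle u,[u',v]\rangle_u=\langle[u,u'],v\rangle_u$ for $u'\in\mathfrak{h}$, which the paper obtains from the $\mathrm{Ad}(H)$-invariance of the fundamental tensor (via Theorem~5.1 of \cite{DH2004}) together with the Cartan tensor property $C_u(u,\cdot,\cdot)=0$; your sketch glosses over this point.
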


The equivalence between (1) and (3) is provided by Proposition 3.4
in \cite{YD2014}.

So we only need
to prove the equivalence between (3) and (4). Assuming (3), for
any nonzero vector $u\in\mathfrak{m}$, there exists an $u'\in\mathfrak{h}$,
such that $\langle u,[u+u',\mathfrak{m}]_\mathfrak{m}\rangle_u=0$. Then for any
$v\in\mathfrak{m}$, we have
$$\langle\eta(u),v\rangle_u=\langle u,[v,u]_\mathfrak{m}\rangle_u=
\langle u,[u',v]\rangle_u.$$
By Theorem 5.1 in \cite{DH2004} and a fundamental property of the Cartan tensor,
$$\langle u,[u',v]\rangle_u=\langle [u,u'],v\rangle-
2C_u(u,v,[u',u])=\langle[u,u'],v\rangle.$$
So we have $$\langle\eta(u),v\rangle_u=\langle[u,u'],v\rangle_u,
\quad\forall v\in\mathfrak{m},$$
i.e. $\eta(u)=[u,u']\in[u,\mathfrak{h}]=T_u(\mathrm{Ad}(H)\cdot u)$, which proves (4) from (3).

Reversing the above steps, we can also prove (3)
from (4).

This ends the proof of Proposition \ref{prop-1}.

There are many compact and non-compact examples of geodesic orbit Finsler spaces. For example, see \cite{XD2017-2}\cite{XZ2018} for the normal homogeneous Finsler spaces and $\delta$-homogeneous Finsler spaces, and see \cite{YD2014} for
nilmanifold examples.

A smooth coset space $G/H$ in which $H$ is compact is called
{\it weakly symmetric} if for any vector $y\in\mathfrak{m}$, there exists
$g\in H$, such that $\mathrm{Ad}(g)\cdot y=-y$.
Any invariant
Finsler metric $F$ on the weakly symmetric coset space $G/H$
is weakly symmetric \cite{DH2010}, thus it is reversible and
it is a geodesic orbit metric.

In later discussion, we will only meet the following two examples of weakly symmetric coset spaces,
$S^{4n-1}=Sp(n)Sp(1)/Sp(n-1)Sp(1)$ with $n>1$ and $S^{15}=Spin(9)/Spin(7)$.
For each one of these two coset spaces, $\mathfrak{m}$
can be decomposed as the sum $\mathfrak{m}=\mathfrak{m}_1+\mathfrak{m}_2$
of two different irreducible $\mathrm{Ad}(H)$-representation subspaces.
A generic $\mathrm{Ad}(H)$-orbit in $\mathfrak{m}$ is
of cohomogeneity two, i.e. a product of two spheres
in $\mathfrak{m}_1$ and $\mathfrak{m}_2$ respectively.
So the invariant Finsler metrics on them must be of
$(\alpha_1,\alpha_2)$-type, corresponding to the decomposition
of $\mathfrak{m}$.

See \cite{Ya2004} for more examples and its classification theory in Riemannian geometry.
\section{Proof of Theorem \ref{main-thm-1}}

Now we prove Theorem \ref{main-thm-1}.

Firstly, it is a well known fact that all homogeneous spheres
$G/H$ where $G$ is a compact connected Lie group with an effective action on $G/H$ are given by the following list:

\begin{eqnarray}
& &SO(n)/SO(n-1),\quad SU(n)/SU(n-1),\quad U(n)/U(n-1),\nonumber\\
& &Sp(n)/Sp(n-1),\quad Sp(n)U(1)/Sp(n-1)U(1),\quad Sp(n)Sp(1)/Sp(n-1)Sp(1),\nonumber\\
& &G_2/SU(3),\quad  Spin(7)/G_2,\quad Spin(9)/Spin(7),\label{list-homo-sphere}
\end{eqnarray}
in which $n>1$.

When $G/H=SO(n)/SO(n-1)$ with $n>2$,
$G_2/SU(3)$ or $Spin(7)/G_2$,
the only homogeneous Finsler metrics
are Riemannian symmetric.

When $G/H=Sp(n)Sp(1)/Sp(n-1)Sp(1)$ with $n>1$ or $Spin(9)/Spin(7)$, we have observed that it
is weakly symmetric and all homogeneous Finsler metrics are
geodesic orbit metrics.

When $G/H=SU(n)/SU(n-1)$ with $n>2$, the connected isometry group $I_o(M,F)$ is isomorphic to $U(n)$ or $SO(2n)$ for any
$SU(n)$-homogeneous Finsler metric $F$.

Summarizing these observations, we see that, to prove the "if" part in the theorem, we only need to prove

\begin{lemma}
 All homogeneous Finsler metrics on $U(n)/U(n-1)$
with $n\geq 1$ and $Sp(n)U(1)/Sp(n-1)U(1)$ with $n>1$ are geodesic orbit metrics.
\end{lemma}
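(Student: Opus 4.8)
The plan is to verify characterization (3) of Proposition \ref{prop-1} directly. In each case I fix the reductive decomposition $\mathfrak{g}=\mathfrak{h}+\mathfrak{m}$ coming from a bi-invariant inner product and refine $\mathfrak{m}$ into the $\mathrm{Ad}(H)$-invariant orthogonal summands forced by the metric type: $\mathfrak{m}=\mathfrak{m}_0\oplus\mathfrak{m}_1$ for $U(n)/U(n-1)$, whose invariant metrics are of $(\alpha,\beta)$-type, and $\mathfrak{m}=\mathfrak{m}_0\oplus\mathfrak{m}_1\oplus\mathfrak{m}_2$ for $Sp(n)U(1)/Sp(n-1)U(1)$, whose invariant metrics are of $(\alpha_1,\alpha_2,\beta)$-type. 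Here $\mathfrak{m}_0$ is the distinguished one-dimensional \emph{trivial} isotropy summand carrying $\beta$, spanned by a unit vector $e_0$, while the $\mathfrak{m}_i$ with $i\geq 1$ are the nontrivial, pairwise inequivalent irreducible summands on which $\mathrm{Ad}(H)$ acts transitively on spheres. For a nonzero $u=u_0+u_1(+u_2)$ with $u_0=\beta(u)e_0$, the task is to produce $u'\in\mathfrak{h}$ with $\langle u,[u+u',w]_\mathfrak{m}\rangle_u=0$ for all $w\in\mathfrak{m}$.

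Two structural facts organize the reduction. Since $\mathfrak{m}_0$ is $\mathrm{Ad}(H)$-trivial, $[e_0,\mathfrak{h}]=0$, so $e_0$ centralizes $H$ and every bracket of $e_0$ against $\mathfrak{h}$ drops out of $[u+u',w]$. Moreover $\mathrm{ad}(e_0)$ is $\mathrm{Ad}(H)$-equivariant and skew for $\alpha$, hence preserves each summand and restricts to a nonzero complex structure $J_i=\mathrm{ad}(e_0)|_{\mathfrak{m}_i}$. Writing out the fundamental tensor $\langle\cdot,\cdot\rangle_u$ of an $(\alpha,\beta)$- or $(\alpha_1,\alpha_2,\beta)$-metric (a combination of the $\alpha$-products, $\beta\otimes\beta$ and the differential of $\alpha$) and using bi-invariance of the background form, I can rewrite condition (3) as the vanishing of a single vector in $\mathfrak{m}$. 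A short check shows this vector has no $\mathfrak{m}_0$-component, so only its $\mathfrak{m}_i$-components matter; on each $\mathfrak{m}_i$ it is a fixed linear combination of $\mathrm{ad}(u')u_i$, of $J_iu_i$, and, in the $Sp$ case, of the $\mathfrak{m}_1$-components of the cross brackets $[u_2,u_1]$ and $[u_0,u_1]$, all of which are $\alpha$-orthogonal to $u_i$.

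For $U(n)/U(n-1)$ this collapses to a single equation $\mathrm{ad}(u')u_1=\lambda J_1u_1$ on $\mathfrak{m}_1\cong\mathbb{C}^{n-1}$ for a scalar $\lambda$ read off from the tensor at $u$. Because $\mathfrak{h}=\mathfrak{u}(n-1)$ contains a central element whose adjoint action on $\mathfrak{m}_1$ is a nonzero multiple of $J_1$, a suitable multiple of it is the desired $u'$, and (3) follows. For $Sp(n)U(1)/Sp(n-1)U(1)$ I will solve the $\mathfrak{m}_1$- and $\mathfrak{m}_2$-equations in two steps. The generator of the $\mathfrak{u}(1)$-factor of $\mathfrak{h}$ acts on the two-dimensional $\mathfrak{m}_2$ as a nonzero rotation, so a suitable multiple of it meets the $\mathfrak{m}_2$-equation, which is one-dimensional since $[u_1,u_2]$ has no $\mathfrak{m}_2$-component and $\mathfrak{sp}(n-1)$ acts trivially there. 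This scalar is now fixed; because $\mathfrak{sp}(n-1)$ acts trivially on $\mathfrak{m}_2$, I may add an $\mathfrak{sp}(n-1)$-term to correct the $\mathfrak{m}_1$-equation without disturbing $\mathfrak{m}_2$. The correction exists because its required value lies in $\mathfrak{m}_1$ and is $\alpha$-orthogonal to $u_1$, hence lies in $\mathrm{ad}(\mathfrak{sp}(n-1))u_1=T_{u_1}(\mathrm{Ad}(Sp(n-1))\cdot u_1)$, the tangent space to the sphere on which $Sp(n-1)$ acts transitively.

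The main obstacle is the $Sp(n)U(1)/Sp(n-1)U(1)$ case. Unlike the single-summand situation, one must keep track of the cross brackets between $\mathfrak{m}_1$ and $\mathfrak{m}_2$ and verify two things: that the $\mathfrak{m}_2$-equation decouples and depends only on the $\mathfrak{u}(1)$-part of $u'$, and that the target of the $\mathfrak{m}_1$-equation — including all cross-bracket and $\beta$-contributions — remains $\alpha$-orthogonal to $u_1$, so that the $\mathfrak{sp}(n-1)$-correction can absorb it. This is precisely the bookkeeping that makes the two-step solve legitimate, and where the different scalars assigned by the $(\alpha_1,\alpha_2,\beta)$-tensor to $\mathfrak{m}_1$ and $\mathfrak{m}_2$ must be handled with care. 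Finally the degenerate directions are treated separately: when $u_1=0$ or $u_2=0$, or $u$ is a multiple of $e_0$, the vector sits in a smaller and more symmetric configuration, the surviving equation is met by a $u'\in\mathfrak{h}$ acting inside the remaining summand (with $u'=0$ when $u\in\mathfrak{m}_0$), and condition (3) holds throughout $\mathfrak{m}\setminus\{0\}$.
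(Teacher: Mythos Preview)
Your proposal is correct and rests on the same criterion as the paper—both amount to showing $\eta(u)\in[\mathfrak{h},u]$ via Proposition~\ref{prop-1}—but the execution differs in a way worth noting. The paper works with characterization (4): it fixes an explicit matrix basis, uses $\mathrm{Ad}(H)$-invariance to reduce to a representative $u=u_1e_1+u_2e_2+u_4e_4$, observes that the Hessian is block diagonal, and then checks by direct computation that $\langle\eta(u),e_i\rangle_u=0$ unless $i\in\{3,5,6\}$, which places $\eta(u)$ inside $[\mathfrak{h},u]$. You instead work with characterization (3) and construct $u'$ explicitly by a two-stage solve: first a $\mathfrak{u}(1)$-multiple to handle the two-dimensional summand, then an $\mathfrak{sp}(n-1)$-correction that is guaranteed to exist because $Sp(n-1)$ acts transitively on the unit sphere of $\mathbb{H}^{n-1}$, so $[\mathfrak{sp}(n-1),u_1]=u_1^{\perp_\alpha}$. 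Your labeling of $\mathfrak{m}_1$ and $\mathfrak{m}_2$ is swapped relative to the paper's, which is harmless.

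The one place to be careful is your ``bookkeeping'' claim that the $\mathfrak{m}_1$-residual is $\alpha$-orthogonal to $u_1$. This is true, and the reasons you cite are the right ones: $\mathrm{ad}(u')$ is $\alpha$-skew on each summand, $J_i=\mathrm{ad}(e_0)|_{\mathfrak{m}_i}$ is $\alpha$-skew, and left multiplication by an imaginary quaternion on $\mathbb{H}^{n-1}$ is $\alpha$-skew, so the cross bracket $[u_2,u_1]\in\mathfrak{m}_1$ is $\alpha$-orthogonal to $u_1$. But note that this orthogonality, together with the block-diagonal shape of $g_u$ for an $(\alpha_1,\alpha_2,\beta)$-metric, is exactly what the paper extracts from its coordinate Hessian; your structural argument and the paper's calculation are two presentations of the same fact. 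The paper's version is quicker to write down; yours makes clearer why the argument should work and why the $Sp(n)/Sp(n-1)$ case (where there is no $\mathfrak{u}(1)$-factor in $\mathfrak{h}$ to absorb the two-dimensional equation) fails.
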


\begin{proof}
To prove this lemma for $G/H=Sp(n)U(1)/Sp(n-1)U(1)$, we consider a reductive decomposition $\mathfrak{g}=\mathfrak{h}+\mathfrak{m}$,
which is orthogonal with respect to a suitably chosen $\mathrm{Ad}(G)$-invariant inner product $|\cdot|_{\mathrm{bi}}^2=\langle\cdot,\cdot\rangle_{\mathrm{bi}}$. We denote the Lie algebra
$\mathfrak{g}=sp(n)\oplus\mathbb{R}v_0$
in which $sp(n)$ is the space of all skew symmetric $n\times n$-matrices with entries in
$\mathbb{H}=\mathbb{R}+\mathbb{R}\mathbf{i}+\mathbb{R}\mathbf{j}
+\mathbb{R}\mathbf{k}$ and $|v_0|_{\mathrm{bi}}=1$. Then $\mathfrak{h}=sp(n-1)\oplus\mathbb{R}$
can be chosen such that the $sp(n-1)$-factor corresponds to the
right-down $(n-1)\times (n-1)$-block, and the $\mathbb{R}$-factor is the diagonal line  $\mathbb{R}(\mathbf{i}E_{1,1},v_0)$, where
$E_{i,j}$ is the $n\times n$-matrix such that its only nonzero entry is 1 in the $i$-th row and $j$-th column.

Then we have the $\mathrm{Ad}(H)$-invariant decomposition
$\mathfrak{m}=\mathfrak{m}_0+\mathfrak{m}_1+\mathfrak{m}_2$.
In this decomposition, $\mathfrak{m}_0$ is linearly spanned by
$e_1=(\mathbf{i}E_{1,1},-v_0)$ with a trivial $\mathrm{Ad}(H)$-action. The summand $\mathfrak{m}_1$ is linearly spanned by $e_2=\mathbf{j}(E_{1,2}+E_{2,1})$ and
$e_3=\mathbf{k}(E_{1,2}+E_{2,1})$, on which the $Sp(n-1)$-factor in $H$ acts trivially, and the $U(1)$-factor
by rotations. The summand $\mathfrak{m}_2$ can be identified
as the space of column vectors $\mathbb{H}^{n-1}=\mathbb{H}\oplus\cdots\oplus\mathbb{H}$,
in which each $\mathbb{H}$-factor is linearly spanned by
\begin{eqnarray*}
e_{4k}= E_{1,k}-E_{k,1},& & e_{4k+1}=\mathbf{i}(E_{1,k}+E_{k,1}),\\
e_{4k+2}=\mathbf{j}(E_{1,k}+E_{k,1})& \mbox{ and }&
e_{4k+3}=\mathbf{k}(E_{1,k}+E_{k,1}),
\end{eqnarray*}
for $1\leq k\leq n-1$. The $\mathrm{Ad}(H)$-actions on $\mathfrak{m}_2$ is the following. The $U(1)$-factor acts by scalar multiplications of $z\in\mathbb{C}$ with $|z|=1$ from the right side, and the $Sp(n-1)$-factor acts by matrix multiplication from the left side. In particular, the natural $Sp(n-1)$-action is transitive on the unit ball in $\mathfrak{m}_2$.

Observing this decomposition of $\mathfrak{m}$
and the $\mathrm{Ad}(H)$-action on each factor, it is not hard to see
there exists a smooth function $f(\cdot,\cdot,\cdot)$, such that for any nonzero vector
$u=\sum_{i=1}^{4n-1}u_ie_i\in\mathfrak{m}$
we have $$F(u)^2=f(u_1,u_2^2+u_3^2,u_4^2+\cdots+u_{4n-1}^2),$$
i.e. any invariant Finsler metric on $Sp(n)U(1)/Sp(n-1)U(1)$
must be of $(\alpha_1,\alpha_2,\beta)$-type. For special $f(\cdot,\cdot,\cdot)$, it may reduce to an $(\alpha_1,\alpha_2)$-metric, $(\alpha,\beta)$-metric or even a Riemannian metric.

By Proposition \ref{prop-1}, to prove $F$ is a geodesic metric,
we only need to prove that the spray vector field $\eta(u)$ is tangent to  $\mathrm{Ad}(H)\cdot u$. Because $\eta(\cdot)$ is
also $\mathrm{Ad}(H)$-invariant, we only need to assume $u=u_1e_1+u_2e_2+u_4e_4$ is a nonzero vector in $\mathfrak{m}$ and prove $\eta(u)$ is contained in $T_u(\mathrm{Ad}(H)\cdot u)=[\mathfrak{h},u]$ which is linearly spanned by $e_i$'s for $i=3$ or $i>4$.

It is easy to show that,
the Hessian matrix $(g_{ij}(u))$ with respect to the linear coordinates
$y=y^ie_i$ satisfies
$g_{ij}(u)=0$ when $i\neq j$, and $i$ or $j$ is not one of
$1$, $2$ and $4$. It is a blocked diagonal matrix, so its
inverse $(g^{ij}(u))$ satisfies similar properties.

Also by direct calculation, we can check that
$$
\langle \eta(u),e_i\rangle_u=\langle u,[e_i,u]_\mathfrak{m}\rangle_u=0,
$$ when $i\notin\{3,5,6\}$. Our previous observation for
$(g^{ij}(u))$ implies that $\eta(u)$ is a linear combination
of $e_3$, $e_5$ and $e_6$, which is tangent to the orbit $\mathrm{Ad}(H)\cdot u$.

This proves all homogeneous Finsler metrics on $Sp(n)U(1)/Sp(n-1)U(1)$ are geodesic orbit metrics.

The proof of this lemma for $U(n)/U(n-1)$ is similar and easier. So we skip the details. Notice that in this case,
any invariant Finsler metric $F$ must be of $(\alpha,\beta)$-type.
\end{proof}

This ends the proof of the "if" part in Theorem \ref{main-thm-1}.

Next we prove the "only if" part in Theorem \ref{main-thm-1}.
The following lemma provides a shortcut.

\begin{lemma}\label{lemma-1}
Let $(G/H,F)$ be a $G$-geodesic orbit space,
and $X$ is a smooth vector field on $G/H$ commuting with all
the Killing vector fields in $\mathfrak{g}=\mathrm{Lie}(G)$,
then $X$ is a Killing vector field of $(G/H,F)$.
\end{lemma}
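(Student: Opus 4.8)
The plan is to verify directly that $X$ is a Killing vector field by showing that its local flow preserves the Finsler metric $F$, and to extract the necessary rigidity from the geodesic orbit hypothesis together with the commuting condition. First I would recall that a vector field $X$ on $(M,F)$ is Killing iff its flow consists of isometries, equivalently iff the Lie derivative of $F$ (viewed as a function on $TM$ via the induced flow on the tangent bundle) vanishes. Since $X$ commutes with every Killing field $Y\in\mathfrak{g}$, the flow $\varphi_t$ of $X$ commutes with the flow of each $Y$, hence $\varphi_t$ normalizes the group $G$ acting by isometries; in particular $\varphi_t$ maps $G$-orbits to $G$-orbits and carries Killing fields to Killing fields by the pushforward $(\varphi_t)_*Y = Y$.

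The key geometric step exploits the geodesic orbit property through Proposition \ref{prop-1}(2). The plan is to fix a point $x\in M$ and an arbitrary nonzero $y\in T_xM$, and use the geodesic orbit assumption to select a Killing field $Y\in\mathfrak{g}$ with $Y(x)=y$ for which $x$ is a critical point of $F(Y(\cdot))$. Because $G$ acts transitively, such $Y$ realizing every tangent direction exists at every point; this lets me encode the value of $F$ at arbitrary $(x,y)$ in terms of the $G$-invariant data. The heart of the argument is then to transport this data under $\varphi_t$: since $X$ commutes with $Y$, one has $(\varphi_t)_* Y = Y$ and $\varphi_t$ sends the integral curve of $Y$ through $x$ to the integral curve of $Y$ through $\varphi_t(x)$. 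Comparing the Finsler length of the velocity vectors along these two geodesics—both governed by the same Killing field $Y$—forces $F(\varphi_t(x), (d\varphi_t)_x y) = F(x,y)$, which is precisely the isometry condition for $\varphi_t$.

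The main obstacle I anticipate is justifying that the invariance of $F$ along the distinguished one-parameter orbits upgrades to invariance of $F$ on \emph{all} of $TM$, rather than merely on the geodesic velocity vectors singled out by Proposition \ref{prop-1}(2). The resolution should come from the observation that the geodesic orbit property supplies, at each point and in \emph{every} direction, a Killing field from $\mathfrak{g}$ realizing that direction as a geodesic velocity; since these directions exhaust $T_xM\setminus\{0\}$ and $\varphi_t$ preserves $F$ on each of them via the commuting relation, continuity and homogeneity of $F$ in the $y$-variable close the gap. A secondary subtlety is verifying that $X$ is genuinely smooth and complete enough for the flow argument to be local-to-global; but since $X$ is assumed smooth and $M=G/H$ is compact in the cases of interest, the flow $\varphi_t$ is globally defined, and showing $\varphi_t^*F=F$ for all small $t$ suffices to conclude $X$ is Killing.
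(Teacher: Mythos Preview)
Your approach and the paper's share the same core mechanism: use the commuting hypothesis to get $(\varphi_t)_*Y=Y$ along the flow of $X$, and use Proposition~\ref{prop-1}(2) to produce, for any prescribed tangent vector, a Killing field $Y\in\mathfrak g$ realizing it with the critical-point property. However, your execution has a genuine gap at the step ``Comparing the Finsler length of the velocity vectors along these two geodesics \dots forces $F(\varphi_t(x),(d\varphi_t)_xy)=F(x,y)$.'' Having chosen $Y$ so that $x$ is a critical point of $f(\cdot)=F(Y(\cdot))$, you only obtain $\frac{d}{dt}f(\varphi_t(x))\big|_{t=0}=0$; nothing forces $\varphi_t(x)$ to be a critical point of this \emph{same} $f$ when $t\neq 0$, so the $Y$-orbit through $\varphi_t(x)$ need not be a geodesic at all, and even if it were, the two constant values of $f$ along the two $Y$-orbits need not coincide. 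Thus the single choice of $Y$ at $x$ does not by itself give constancy of $t\mapsto F(\varphi_t(x),(\varphi_t)_*y)$.

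The paper closes this gap by contradiction: if $X$ were not Killing, the mean value theorem produces a parameter $t'$ at which $\frac{d}{dt}F(\varphi_t(x),(\varphi_t)_*y)\neq 0$, and one then selects $Y$ adapted to $(x',y')=(\varphi_{t'}(x),(\varphi_{t'})_*y)$ rather than to $(x,y)$, so that the criticality sits exactly where the derivative must vanish. An equivalent direct repair of your argument is to apply your construction at \emph{every} point $\varphi_s(x)$ with direction $(\varphi_s)_*y$, obtaining a (possibly $s$-dependent) $Y_s\in\mathfrak g$; this yields $g'(s)=0$ for $g(s)=F(\varphi_s(x),(\varphi_s)_*y)$ at every $s$, hence $g$ is constant. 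Your anticipated ``main obstacle'' (whether the geodesic directions cover all of $T_xM\setminus\{0\}$) is not the actual difficulty---the geodesic orbit hypothesis already guarantees that---the missing ingredient is moving the choice of $Y$ along the flow of $X$.
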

\begin{proof} We prove this lemma by contradiction, i.e. we assume conversely that $X$ is not a Killing vector field of
$(G/H,F)$.
Let $\rho_t$ be the one-parameter subgroup of diffeomorphisms
generated by $X$. Then there exists some $t_0>0$, $x\in M$
and nonzero vector $y\in T_xM$, such that
$F(x,y)\neq F(\rho_{t_0}(x),(\rho_{t_0})_*y)$.
Notice that $(\rho_{t})_*y\neq y$ for all $t\in\mathbb{R}$ because $\rho_t$'s are diffeomorphisms.
Applying the mean value theorem in calculus, we can find some
$t'\in (0,t_0)$ such that
\begin{equation}
\label{000}
\frac{d}{dt}F(\rho_t(x),(\rho_t)_*y)|_{t=t'}\neq 0.
\end{equation}
Denote $x'=\rho_{t'}$ and $y'=(\rho_{t'})_*y$. Then by Proposition \ref{prop-1}, the $G$-geodesic orbit condition
implies that there exists a Killing vector field $Y\in\mathfrak{g}$, such that $Y(x')=y'$, and $x'$ is
a critical point for the function $f(\cdot)=F(Y(\cdot))$.
Because $X$ commutes with $Y$, we have $Y(\rho_t(x))=(\rho_{t})_* y$ for all $t\in\mathbb{R}$,
and we have
\begin{equation}
\frac{d}{dt}F(\rho_t(x),(\rho_t)_*y)|_{t=t'}
=\frac{d}{dt}F(\rho_t(x),Y(\rho_t(x)))|_{t=t'}= 0.
\end{equation}
which is a contradiction with (\ref{000}).

So $X$ must be a Killing vector field of $(G/H,F)$, which ends
the proof of this lemma.
\end{proof}

Assume conversely that the "only if" claim in Theorem \ref{main-thm-1} is not true, i.e. there exists a homogeneous
Finsler metric on a sphere $M$ such that $I_o(M,F)$
is isomorphic to $Sp(k)$ for some $k\geq 1$. Then $M=S^{4k-1}=Sp(k)/Sp(k-1)$. But $S^{4k-1}$ can also be
presented as $Sp(k)Sp(1)/Sp(k-1)Sp(1)$.
By Lemma \ref{lemma-1}, the $Sp(1)$-factor is also
contained in $I_0(M,F)$, which is
a contradiction to our assumption.

This ends the proof of Theorem \ref{main-thm-1}.

As a by-product, we also get the following corollary.

\begin{corollary}
If a homogeneous Finsler metric $F$ on $M=Sp(n)/Sp(n-1)$ with
$n>0$ is an $Sp(n)$-geodesic orbit metric, then $F$ is
$Sp(n)Sp(1)$-invariant.
\end{corollary}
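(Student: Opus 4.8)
The plan is to realize the extra $Sp(1)$-factor as a commuting family of vector fields and then quote Lemma \ref{lemma-1} verbatim, exactly as in the ``only if'' argument above. First I would present $M=Sp(n)/Sp(n-1)$ concretely as the unit sphere $S^{4n-1}\subset\mathbb{H}^n$, on which $Sp(n)$ acts by left matrix multiplication of column vectors. This is the action whose infinitesimal generators span $\mathfrak{g}=sp(n)$ and which defines the $Sp(n)$-geodesic orbit structure assumed in the hypothesis. On the same sphere there is a second, independent action of $Sp(1)$, the unit quaternions, by right scalar multiplication $v\mapsto vq$. Because left matrix multiplication and right scalar multiplication on $\mathbb{H}^n$ commute by associativity, these two actions on $S^{4n-1}$ commute; together they produce precisely the alternative presentation $S^{4n-1}=Sp(n)Sp(1)/Sp(n-1)Sp(1)$ recorded in (\ref{list-homo-sphere}).

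Next I would pass from commuting actions to commuting vector fields. For $Y\in sp(1)$ let $Y^*$ be the fundamental vector field of the right $Sp(1)$-action; its flow is $v\mapsto v\exp(tY)$. For $X\in\mathfrak{g}=sp(n)$ the Killing field $X^*$ has flow $v\mapsto\exp(tX)v$. Since $\exp(tX)(v\exp(sY))=(\exp(tX)v)\exp(sY)$ for all $s,t$, the two flows commute for every pair $(X,Y)$, hence $[X^*,Y^*]=0$. Thus each $Y^*$ commutes with every Killing vector field in $\mathfrak{g}$.

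Now the main tool applies directly. Since $(M,F)$ is by hypothesis an $Sp(n)$-geodesic orbit space, Lemma \ref{lemma-1} shows that each $Y^*$, $Y\in sp(1)$, is a Killing vector field of $(M,F)$. Hence the flows $v\mapsto v\exp(tY)$ are isometries, and because $Sp(1)=S^3$ is connected the whole right $Sp(1)$-action preserves $F$. Combining this with the assumed $Sp(n)$-invariance yields that $F$ is $Sp(n)Sp(1)$-invariant, which is the assertion. The argument is uniform in $n\geq 1$, including the boundary case $Sp(1)/Sp(0)=S^3$, where it says that a left-invariant geodesic orbit metric on the group $Sp(1)$ is automatically bi-invariant.

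I do not expect a genuine obstacle here, since the corollary is essentially a restatement of the computation already used to finish the ``only if'' part of Theorem \ref{main-thm-1}; the only point requiring care is the verification that the right $Sp(1)$-action descends to a smooth action on $M$ whose generators are honest global vector fields commuting with all of $\mathfrak{g}$. Once that bookkeeping is in place, the conclusion is immediate from Lemma \ref{lemma-1}.
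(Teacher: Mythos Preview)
Your proposal is correct and follows essentially the same route as the paper: the corollary is stated there as a by-product of the ``only if'' argument for Theorem~\ref{main-thm-1}, which uses the presentation $S^{4n-1}=Sp(n)Sp(1)/Sp(n-1)Sp(1)$ together with Lemma~\ref{lemma-1} to force the commuting $Sp(1)$-factor into $I_o(M,F)$. Your write-up supplies the concrete realization of the two commuting actions and the passage to vector fields that the paper leaves implicit, but the underlying idea is identical.
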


\section{Killing navigation process}

Navigation process is an important technique in Finsler geometry for constructing new metrics from old ones with some geodesic or curvature properties preserved.

Let $F$ be a Finsler metric and $V$ a smooth vector field on $M$ such that $F(-V(\cdot))<1$ everywhere.
For any $x\in M$ and any nonzero vector $y\in T_xM$, denote $\tilde{y}=y+F(y)V(x)$. Then $\tilde{F}(\tilde{y})=F(y)$ defines new Finsler metric on $M$. We call $(M,\tilde{F})$ the Finsler metric
defined by the {\it navigation datum} $(F,V)$, and this construction the {\it navigation process}. It is not hard to observe that $\tilde{F}(V(\cdot))<1$ everywhere, and
$F$ is also defined by the navigation datum $(\tilde{F},-V)$.

Any Randers metric $F=\alpha+\beta$ can be defined by a unique navigation datum $(\langle\cdot,\cdot\rangle,W)$ where $\langle\cdot,\cdot\rangle$ is a Riemannian metric. The correspondence from $(\langle\cdot,\cdot\rangle,W)$ to $(\alpha,\beta)$ is given by
\begin{eqnarray*}
\alpha=\frac{\sqrt{\lambda \langle y,y\rangle +\langle y,W\rangle^2}}{1-\lambda},\mbox{ and }
\beta=-\frac{\langle y,W\rangle}{1-\lambda},
\end{eqnarray*}
where $\lambda=\langle W,W\rangle<1$.

If the vector field $V$ in the navigation datum $(F,V)$ is a Killing vector field of $(M,h)$, then we call this navigation process a {\it Killing navigation process}.

In later discussions, we will use the following two properties of Killing navigation process proved in \cite{HM2011} and
\cite{HM2007} respectively.

\begin{proposition} \label{killing-navigation-proposition-1}
Let $\tilde{F}$ be the Finsler metric on $M$ defined by the navigation datum $(F,V)$ in which $V$ is
a Killing vector field. Let $c(t)$ be a unit speed geodesic for $F$ and $\rho_t$ the one-parameter subgroup
generated by $V$. Then $\tilde{c}(t)=\rho_t(c(t))$ is a unit speed geodesic for $\tilde{F}$.
\end{proposition}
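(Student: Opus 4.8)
The plan is to set up the time-synchronized correspondence $\sigma\mapsto\tilde\sigma$, $\tilde\sigma(t)=\rho_t(\sigma(t))$, between $F$-curves and $\tilde F$-curves, show it preserves unit-speed parametrization, and then argue that it carries geodesics to geodesics. Here a \emph{Killing vector field} $V$ means one generating a one-parameter group $\rho_t$ of $F$-isometries.

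First I would dispose of the unit-speed claim, which is the clean part. Differentiating $\tilde c(t)=\rho_t(c(t))$ and using that $\rho_t$ is a flow gives
$$\tilde c'(t)=V(\tilde c(t))+(\rho_t)_* c'(t),$$
where the first summand is the generator evaluated at $\tilde c(t)$ and the second is the pushforward of the $F$-velocity. Since $V$ generates $F$-isometries, $w:=(\rho_t)_* c'(t)$ is a unit $F$-vector at $\tilde c(t)$, so $\tilde c'(t)=w+F(w)\,V(\tilde c(t))$ is exactly the navigation image of $w$; by the defining relation $\tilde F(w+F(w)V)=F(w)$ of the datum $(F,V)$ we get $\tilde F(\tilde c'(t))=F(w)=1$. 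Since each $\rho_t$ preserves $F$ and fixes its own generator $V$, a short computation shows it also preserves $\tilde F$, so the inverse correspondence $\sigma(t)=\rho_{-t}(\tilde\sigma(t))$, read through the inverse datum $(\tilde F,-V)$, likewise maps $\tilde F$-unit-speed curves to $F$-unit-speed curves.

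For the geodesic claim I would use the local length-minimizing characterization of geodesics together with this de-navigation. The key structural fact is that de-navigation preserves $F$-arclength of unit-speed curves but shifts the terminal endpoint along the flow by the elapsed parameter: a $\tilde F$-unit-speed curve $\tilde\gamma$ of length $\ell$ from $p$ to $q$ maps to the $F$-unit-speed curve $\gamma(t)=\rho_{-t}(\tilde\gamma(t))$ of $F$-length $\ell$ from $p$ to $\rho_{-\ell}(q)$. Minimizing the $\tilde F$-length of curves from $p$ to $q$ thereby becomes Zermelo's problem: minimize the travel parameter $\ell$ subject to reaching $q$ against the drift, equivalently minimize $F$-arclength to the moving target $\rho_{-\ell}(q)$ with the self-consistency $\ell=d_F(p,\rho_{-\ell}(q))$, where $d_F$ is the $F$-distance. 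A first-variation analysis of this constrained minimization — in which the drift $V$ enters only through the moving target — singles out the flow image $\tilde c(t)=\rho_t(c(t))$ of an $F$-geodesic $c$ as the optimal trajectory.

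The main obstacle is precisely this time-synchronization: because the flow is time-dependent, de-navigated competitors of different lengths land on different terminal points $\rho_{-\ell}(q)$, so one cannot compare $\tilde F$-length against $F$-length at fixed endpoints directly, and the naive minimization argument does not close. To handle arbitrary (not necessarily minimizing) geodesics and make the argument fully local, I would instead transform the geodesic spray ODE: writing the $\tilde F$-geodesic equation $\tilde c''(t)+2\tilde G(\tilde c(t),\tilde c'(t))=0$, substituting $\tilde c(t)=\rho_t(c(t))$, and using the relation between the spray coefficients $\tilde G$ and $G$ under Killing navigation together with the Killing condition on $V$ on the slit tangent bundle, the equation should reduce term by term to the $F$-geodesic equation for $c$. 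This spray computation is the technical heart and is where I expect the real work to lie; the Zermelo minimization picture is the conceptual guide that tells us the computation must close.
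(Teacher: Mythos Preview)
The paper does not give its own proof of this proposition; it simply quotes the result from \cite{HM2011}. So there is no in-paper argument to compare against, and your proposal is being judged on its own merits.

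Your unit-speed computation is correct and complete: differentiating $\tilde c(t)=\rho_t(c(t))$ gives $\tilde c'(t)=(\rho_t)_*c'(t)+V(\tilde c(t))$, and since $w:=(\rho_t)_*c'(t)$ has $F(w)=1$ by the isometry property, the navigation relation $\tilde F(w+F(w)V)=F(w)$ yields $\tilde F(\tilde c'(t))=1$.

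The gap is in the geodesic half. You outline two routes and finish neither: you flag the Zermelo minimization as ``not closing'' because de-navigated competitors land on $\ell$-dependent endpoints, and you defer the spray computation to ``should reduce term by term'' without doing it. As written, the proposition is not proved.

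In fact your minimization approach does close, and the missing ingredient is just the hypothesis $F(-V)<1$ together with the triangle inequality. Suppose $\tilde\gamma$ is a $\tilde F$-unit-speed curve from $p=c(0)$ to $q=\tilde c(\ell)=\rho_\ell(c(\ell))$ of length $\ell'<\ell$. De-navigation gives an $F$-unit-speed curve from $p$ to $\rho_{-\ell'}(q)=\rho_{\ell-\ell'}(c(\ell))$ of $F$-length $\ell'$, so $d_F(p,\rho_{\ell-\ell'}(c(\ell)))\le\ell'$. Now the flow segment $s\mapsto\rho_{\ell-\ell'-s}(c(\ell))$, $s\in[0,\ell-\ell']$, joins $\rho_{\ell-\ell'}(c(\ell))$ to $c(\ell)$ with $F$-speed $F(-V)<1$, hence has $F$-length strictly less than $\ell-\ell'$. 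The triangle inequality then gives
\[
\ell=d_F(p,c(\ell))\le d_F(p,\rho_{\ell-\ell'}(c(\ell)))+d_F(\rho_{\ell-\ell'}(c(\ell)),c(\ell))<\ell'+(\ell-\ell')=\ell,
\]
a contradiction. Thus $\tilde c$ is locally $\tilde F$-minimizing and unit speed, hence a $\tilde F$-geodesic. Adding this short paragraph to your proposal would make it a complete proof; alternatively, the spray computation you allude to is exactly what \cite{HM2011} carries out.
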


\begin{proposition}\label{killing-navigation-proposition-2}
Let $\tilde{F}$ be the Finsler metric on $M$ defined by the navigation datum $(F,V)$ in which $V$ is
a Killing vector field. For any $x\in M$, nonzero vector $y\in T_xM$ and tangent plane
$\mathbf{P}=\mathrm{span}\{y,u\}$ with $\langle y,u\rangle_y=0$, denote $\tilde{y}=y+F(y)V(x)$, $\tilde{\mathbf{P}}=\mathrm{span}\{\tilde{y},u\}$ and $\tilde{K}(x,\tilde{y},\tilde{P})$ the flag curvature of the triple $(x,\tilde{y},\tilde{P})$ with respect to the metric $\tilde{F}$, then we have
$$K(x,y,\mathbf{P})=\tilde{K}(x,\tilde{y},\tilde{P}).$$
\end{proposition}

\section{Proof of Theorem \ref{main-thm-2}}

First  we prove the "if" part of Theorem \ref{main-thm-2}.
If $F$ is a homogeneous Randers metric on $M=S^n$ with $n>1$
such that $K\equiv1$, then $F$ corresponds to the navigation
datum $(h,W)$ such that $h$ is the Riemannian metric for
a unit sphere, and $W$ is a Killing vector field of constant
length on $(M,h)$. Obviously the unit sphere $(M,h)$ is a geodesic orbit space, i.e. each geodesic is homogeneous. Then
by Proposition \ref{killing-navigation-proposition-1}, each
geodesic of $(M,F)$ is also homogeneous.

Then we prove the "only if" part of Theorem \ref{main-thm-2}.

Consider a geodesic orbit Finsler sphere $(M,F)=(S^n,F)$ with $n>1$ and $K\equiv 1$. There are two possibilities.

\begin{lemma} Either $(M,F)$ is the standard Riemannian unit sphere or
$I_o(M,F)$ contains a one-dimensional center.
\end{lemma}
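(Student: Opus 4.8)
The plan is to analyze the connected isometry group $G=I_o(M,F)$ of the geodesic orbit Finsler sphere via Theorem~\ref{main-thm-1}. By that theorem, since $(M,F)$ is a geodesic orbit space, $G$ cannot be isomorphic to $Sp(k)$ for any $k\geq 1$; hence $M=G/H$ is one of the remaining homogeneous sphere presentations in the list (\ref{list-homo-sphere}). First I would inspect this list case by case. For the rigid cases $SO(n)/SO(n-1)$ with $n>2$, $G_2/SU(3)$, and $Spin(7)/G_2$, the only invariant Finsler metrics are Riemannian symmetric, so together with the constant curvature hypothesis $K\equiv 1$ these force $(M,F)$ to be the standard Riemannian unit sphere, landing us in the first alternative.

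Next I would handle the remaining presentations, namely $SU(n)/SU(n-1)$, $U(n)/U(n-1)$, $Sp(n)U(1)/Sp(n-1)U(1)$, and the weakly symmetric cases $Sp(n)Sp(1)/Sp(n-1)Sp(1)$ and $Spin(9)/Spin(7)$. The goal in each of these is to exhibit a one-dimensional central subgroup inside $G=I_o(M,F)$. The natural strategy is to locate, in the Lie algebra $\mathfrak{g}$, a nontrivial central element, or equivalently a Killing vector field commuting with all of $\mathfrak{g}$; Lemma~\ref{lemma-1} guarantees that any vector field commuting with all Killing fields is itself Killing, which is the mechanism that lets us enlarge or identify the isometry algebra. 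For the cases built from $U(n)/U(n-1)$ and $Sp(n)U(1)/Sp(n-1)U(1)$, the extra $U(1)$ factor is manifestly central and supplies the one-dimensional center directly. The point is that once we have excluded the pure $Sp(k)$ case, every surviving presentation either is Riemannian of constant curvature or carries a distinguished central circle in its full isometry group.

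I would organize the argument so that the two alternatives are genuinely exhaustive and mutually compatible with $K\equiv 1$: if no central circle is present, the only structures available from the list are the rigid symmetric ones, whose constant-curvature invariant metric is the round sphere; otherwise a center of dimension one appears. A clean way to phrase this is to split on whether $F$ is Riemannian or not. If $F$ is Riemannian with $K\equiv 1$ it is the standard unit sphere. If $F$ is genuinely non-Riemannian, then $G$ is strictly larger than the isometry group of the round metric in a controlled way, and the reductive decomposition together with the isotropy representation analysis (as carried out in the proof of Theorem~\ref{main-thm-1}) pins down the central $U(1)$.

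The main obstacle I anticipate is proving that the center is exactly one-dimensional rather than merely nontrivial, and ruling out the possibility that a non-round geodesic orbit structure could force $G$ into a configuration with either no center or a higher-dimensional center incompatible with the sphere's effective action. This requires carefully using the classification of $I_o(M,F)$ for each presentation and the effectiveness of the $G$-action on $S^n$. I would lean on the structural facts already assembled in Section~4 (in particular the identification of $I_o(M,F)$ as $U(n)$ or $SO(2n)$ in the $SU(n)$ case, and the weakly symmetric descriptions) to control the center precisely, so that the dichotomy between the round sphere and the existence of a one-dimensional center becomes a complete and rigorous case check.
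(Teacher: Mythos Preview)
There is a genuine gap in your treatment of the weakly symmetric cases $Sp(n)Sp(1)/Sp(n-1)Sp(1)$ and $Spin(9)/Spin(7)$. You state that ``the goal in each of these is to exhibit a one-dimensional central subgroup inside $G=I_o(M,F)$,'' but neither $Sp(n)Sp(1)$ nor $Spin(9)$ has a positive-dimensional center (both have finite center), and for a genuinely non-Riemannian $(\alpha_1,\alpha_2)$-metric on these coset spaces the connected isometry group is exactly $Sp(n)Sp(1)$ or $Spin(9)$. So no central circle is available, and Lemma~\ref{lemma-1} does not help here: there is no vector field commuting with all of $\mathfrak{g}$ other than zero. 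Your fallback of splitting on ``Riemannian vs.\ non-Riemannian'' does not close the argument either, because a priori these spaces carry many non-Riemannian invariant $(\alpha_1,\alpha_2)$-metrics.

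The missing idea is that these two coset spaces are weakly symmetric, so any invariant Finsler metric on them is \emph{reversible}; then one invokes Theorem~\ref{reversible-constant-flag-curvature} (Kim--Min), which says that a reversible Finsler metric on $S^n$ with $K\equiv 1$ is necessarily Riemannian. This is precisely where the hypothesis $K\equiv 1$ enters the lemma and forces these two cases into the ``standard Riemannian unit sphere'' alternative rather than the ``one-dimensional center'' alternative. With this step in place, the remaining cases $U(n)/U(n-1)$, $Sp(n)U(1)/Sp(n-1)U(1)$, and $SU(n)/SU(n-1)$ (whose full isometry group is $U(n)$ unless the metric is round) all visibly carry a central $U(1)$, and the dichotomy is complete. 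Your overall case-by-case strategy and the handling of the rigid and $U(1)$-containing cases are otherwise in line with the paper's argument.
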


The proof is just a summarization of some previous observations
for the list (\ref{list-homo-sphere}) in the proof of Theorem
\ref{main-thm-1}, and a direct application of Theorem \ref{main-thm-1}. Notice that if $M=Sp(m)Sp(1)/Sp(m-1)Sp(1)$ when $n=4m-1$ or $Spin(9)/Spin(7)$ when $n=15$, the homogeneous metric $F$ is reversible. The following theorem in \cite{KM2009} indicates $F$ must be Riemannian.

\begin{theorem}\label{reversible-constant-flag-curvature}
Any reversible Finsler metric with $K\equiv 1$ on a sphere $S^n$ with $n>1$ is Riemannian.
\end{theorem}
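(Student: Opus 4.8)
The plan is to reduce the statement to the vanishing of the Cartan torsion and then invoke Deicke's theorem, which says that a Finsler metric is Riemannian if and only if its mean Cartan torsion $\mathbf{I}$ vanishes identically. Thus it suffices to prove $\mathbf{I}\equiv 0$ on the unit tangent bundle $SM$, which is compact since $M=S^n$ is compact. The tool I would use is the integral (Bochner-type) technique of Akbar--Zadeh for constant flag curvature: differentiating the structure equations of $(M,F)$ with $K\equiv\lambda$ and integrating the resulting identity over $SM$ yields a formula of the schematic shape
\[ \int_{SM}\lvert\nabla\mathbf{I}\rvert^2\,d\mu \;=\; \lambda\int_{SM}\lvert\mathbf{I}\rvert^2\,d\mu \;+\;\int_{SM}(\text{lower order and mixed terms})\,d\mu, \]
where $d\mu$ is the canonical measure on $SM$. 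For $\lambda<0$ the curvature term is strictly helpful and the identity forces $\mathbf{I}\equiv 0$; this is exactly Akbar--Zadeh's rigidity in negative constant curvature. The difficulty in our case is that $\lambda=1>0$, so the curvature term carries the wrong sign and the identity by itself is inconclusive --- as it must be, since the non-reversible Katok and Bryant metrics have $K\equiv 1$ and are genuinely non-Riemannian.

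The reversibility hypothesis is what I would use to defeat the wrong sign. Reversibility, $F(x,-y)=F(x,y)$, means that the fibrewise flip $\iota(x,y)=(x,-y)$ is a well-defined involution of $SM$ preserving the canonical measure $d\mu$, and that it intertwines the geodesic flow, $\phi_t\circ\iota=\iota\circ\phi_{-t}$. The key structural observation is a parity count: the Cartan and Landsberg tensors are built from odd-order vertical derivatives of $F^2$ and so are odd under $\iota$, whereas $F$, the flag curvature, and the fundamental tensor are even. Consequently every term in the Akbar--Zadeh identity carries a definite $\iota$-parity, and every $\iota$-odd term integrates to zero over the $\iota$-invariant pair $(SM,d\mu)$.

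The crux of the proof is therefore to show that, after discarding the odd terms that reversibility kills, the surviving identity becomes sign-definite even for $\lambda=1$ and forces $\int_{SM}\lvert\mathbf{I}\rvert^2\,d\mu=0$, hence $\mathbf{I}\equiv 0$. Concretely, I expect the mixed terms responsible for spoiling the sign to be precisely the $\iota$-odd ones, so that their vanishing upgrades the inconclusive identity into a definite statement of the form
\[ \int_{SM}\lvert\nabla\mathbf{I}\rvert^2\,d\mu \;+\; \int_{SM}\lvert\mathbf{I}\rvert^2\,d\mu \;=\;0. \]
Once $\mathbf{I}\equiv 0$ is established, Deicke's theorem gives that $F$ is Riemannian, and a Riemannian metric with $K\equiv 1$ on $S^n$ with $n>1$ is the round unit sphere.

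The main obstacle I anticipate is exactly this parity bookkeeping in the constant-curvature structure equations: one must track carefully how $\iota$ acts on each tensor (the full Cartan tensor, the Landsberg tensor, their horizontal and vertical covariant derivatives, and the curvature corrections that survive when $K$ is constant but the metric is not Berwald), and verify that the terms obstructing sign-definiteness are genuinely $\iota$-odd rather than merely of indefinite sign. A secondary delicate point is whether the mean torsion $\mathbf{I}$ alone closes the identity or whether one must run the argument for the full Cartan tensor and only pass to its trace at the end; the $\iota$-symmetry should apply verbatim in both formulations, but the sign-definiteness is likely cleaner for one of them.
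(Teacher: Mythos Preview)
The paper does not give its own proof of this theorem: it is quoted verbatim from Kim--Min \cite{KM2009} and used as a black box in the proof of Theorem~\ref{main-thm-2}. So there is no in-paper argument to compare your proposal against.

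That said, your proposal has a genuine gap independent of what Kim--Min actually do. Grant all the parity bookkeeping you outline: on a reversible metric the flip $\iota(x,y)=(x,-y)$ preserves $d\mu$, the mean Cartan torsion $\mathbf{I}$ is $\iota$-odd, and the geodesic spray $G$ satisfies $\iota_*G=-G$. Then $|\mathbf{I}|^2$ is $\iota$-even, and since $\dot{\mathbf{I}}=G(\mathbf{I})$ is the derivative of an odd quantity along an odd flow, $\dot{\mathbf{I}}$ is $\iota$-even and hence $|\dot{\mathbf{I}}|^2$ is $\iota$-even as well. Both principal terms in the Akbar--Zadeh identity therefore survive averaging over $\iota$, and after you kill whatever genuinely odd cross-terms there may be, what remains for $\lambda=1$ is still
\[
\int_{SM}|\nabla\mathbf{I}|^2\,d\mu \;=\; \int_{SM}|\mathbf{I}|^2\,d\mu,
\]
the equality case of a Wirtinger-type inequality, which does not force $\mathbf{I}=0$. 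Your displayed ``definite statement'' $\int|\nabla\mathbf{I}|^2+\int|\mathbf{I}|^2=0$ would require the sign of the curvature term $\lambda\,|\mathbf{I}|^2$ itself to flip, and parity cannot do that: this term is $\iota$-even for every $\lambda$. In short, reversibility as a bare $\mathbb{Z}_2$-symmetry on $SM$ is not enough to rescue the Akbar--Zadeh integral method when $\lambda>0$; this is consistent with your own schematic formula, which with $\lambda=1$ and vanishing mixed terms gives a difference equal to zero, not a sum.

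The mechanism that actually works in positive curvature is a different consequence of reversibility: it forces the antipodal map to be an isometric involution, so every unit-speed geodesic is closed of length exactly $2\pi$ and all geodesics issuing from a point $x$ refocus at the single point $\psi(x)$. One then combines the ODE $\ddot{\mathbf{I}}+\mathbf{I}=0$ along each closed geodesic with this refocusing (and the smoothness of the metric at $\psi(x)$) to obtain pointwise constraints that a global Bochner identity cannot see. If you want to pursue a proof, the geometric input you need is this closed-geodesic/refocusing structure, not the $\iota$-parity on $SM$.
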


To continue the proof of Theorem \ref{main-thm-2},
we may assume $I_o(M,F)$ has a one-dimensional center, i.e.
$G=I_o(M,F)=U(m)$ when $n=2m-1$, or $Sp(m)U(1)$ when
$n=4m-1$.

Any nonzero Killing vector field $V$ in the center of
$\mathfrak{g}=\mathrm{Lie}(G)$ has a constant $F$-length.
Let $c(t)$ be an integration curve of $V$. Then
$c(t)$ is a closed geodesic of constant speed. So is $c(-t)$ because $-V$
is also a Killing vector field of constant length.
Let $\lambda_+$ and $\lambda_-$ be the length of the prime closed geodesics $c(t)$ and $c(-t)$ respectively.
We may assume $\lambda_-\leq\lambda_+$ by choosing $-V$ as $V$ if necessary. Then we have the following lemma.

\begin{lemma}
Keep all notations and assumptions as above, then $\lambda_-\leq 2\pi$.
\end{lemma}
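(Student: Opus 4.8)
The plan is to run a Killing navigation along the central field $V$, with a precisely chosen scaling that makes the navigated metric reversible, and then to read off the bound from the way the two prime lengths transform.

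First I would record how the two lengths behave. Write $a_+=F(V)$ and $a_-=F(-V)$; both are constant because $V$ lies in the centre and $F$ is homogeneous. If $T$ is the (metric–independent) period of the flow $\rho_t$ of $V$, then $\lambda_+=a_+T$ and $\lambda_-=a_-T$, so $a_+/a_-=\lambda_+/\lambda_-=:r\ge1$. Rescaling $V$ by a positive constant leaves $\lambda_\pm$ unchanged but scales $a_\pm$, so I may assume $a_-<1$, whence $(F,V)$ is a legitimate navigation datum. Let $\tilde F$ be the navigated metric; by Proposition \ref{killing-navigation-proposition-2} it again has $K\equiv1$, and it remains a homogeneous geodesic orbit sphere for the same group, with $V$ still a Killing field. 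Parametrising the orbit of $V$ through a point $p$ by arc length and applying Proposition \ref{killing-navigation-proposition-1}, the forward and backward unit-speed geodesics $\rho_{s/a_+}(p)$ and $\rho_{-s/a_-}(p)$ are carried to $\rho_{s(1+1/a_+)}(p)$ and $\rho_{s(1-1/a_-)}(p)$, which are again orbits of $V$; computing their prime periods gives
\[
\tilde\lambda_+=\frac{\lambda_+}{1+a_+},\qquad \tilde\lambda_-=\frac{\lambda_-}{1-a_-}.
\]

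Next I would fix the scaling by $a_-=\tfrac{r-1}{2r}$, which lies in $[0,\tfrac12)$ and so keeps the datum valid. This is exactly the value forcing $\tilde\lambda_+=\tilde\lambda_-$, i.e. $\tilde F(V)=\tilde F(-V)$ along the central direction, and the common value equals the harmonic mean
\[
H=\frac{2\lambda_+\lambda_-}{\lambda_++\lambda_-}.
\]
I would then argue that $\tilde F$ is in fact globally reversible: in both reduced cases the isotropy representation forces the invariant Minkowski norm to be symmetric on every summand except the one-dimensional central line, so the only obstruction to reversibility is the asymmetry in the $V$-direction, and the navigation above is designed precisely to cancel it. Granting this, Theorem \ref{reversible-constant-flag-curvature} shows $\tilde F$ is Riemannian, hence the round unit sphere, all of whose prime closed geodesics have length $2\pi$; thus $H=2\pi$.

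Finally, the harmonic mean always satisfies $\lambda_-\le H\le\lambda_+$; concretely $H-\lambda_-=\lambda_-(\lambda_+-\lambda_-)/(\lambda_++\lambda_-)\ge0$ because $\lambda_-\le\lambda_+$. Combined with $H=2\pi$ this gives $\lambda_-\le2\pi$, as claimed. The routine ingredients are the two length formulas and the harmonic-mean algebra; the main obstacle is the reversibility step, namely showing that equalising the two central lengths really yields a globally reversible $\tilde F$. This is where the explicit structure of the invariant norms on $U(m)/U(m-1)$ and $Sp(m)U(1)/Sp(m-1)U(1)$ must enter—specifically that non-reversibility is confined to the central $\beta$-direction, so that cancelling the central asymmetry leaves an $(\alpha)$- or $(\alpha_1,\alpha_2)$-metric, both of which are reversible.
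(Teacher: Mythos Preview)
Your reversibility step is a genuine gap, and it is where the argument fails. For $G/H=U(m)/U(m-1)$ the invariant norms on $\mathfrak m=\mathfrak m_0\oplus\mathfrak m_1$ are exactly the $(\alpha,\beta)$-norms $F(u)=\alpha(u)\,\phi(\beta(u)/\alpha(u))$ with an \emph{arbitrary} admissible $\phi$; reversibility is the condition $\phi(-s)=\phi(s)$ for every $s\in[-1,1]$, not just the single scalar condition $\phi(1)=\phi(-1)$ you are imposing. Navigation by $\epsilon V$ translates the indicatrix along the $\beta$-axis, and choosing $\epsilon$ so that the two axial intercepts become symmetric fixes one number, whereas central symmetry of the indicatrix is an infinite-dimensional constraint. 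Concretely, take $\phi(s)=1+\delta s^{3}$ for small $\delta$: after the translation that symmetrises the poles, the indicatrix point at $\beta=0$ is $(\epsilon,1)$ with $\epsilon\approx\delta\neq0$, so $\tilde F$ is still irreversible. The same objection applies verbatim to the $(\alpha_1,\alpha_2,\beta)$-case on $Sp(m)U(1)/Sp(m-1)U(1)$. Your remark that the isotropy action contains $-\mathrm{id}$ on $\mathfrak m_1$ (and $\mathfrak m_2$) only says the norm depends on $|u_1|$ and $|u_2|$; it says nothing about evenness in the $\mathfrak m_0$-coordinate beyond what you have already used.

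Note also that if your reversibility claim were correct, it would prove Theorem~\ref{main-thm-2} outright ($\tilde F$ Riemannian $\Rightarrow$ $F$ Randers), rendering this lemma and the rest of Section~6 superfluous; that is a signal that the step is doing more work than has been justified. The paper's proof of the lemma is completely different and much shorter: it uses the antipodal Clifford--Wolf translation $\psi$ on a $K\equiv1$ Finsler sphere, for which $d_F(x,\psi(x))=\pi$ and which preserves every geodesic. Placing $x$, $\psi(x)$, $\psi^{2}(x)$ on the closed orbit $c$, one sees that if both prime lengths exceed $2\pi$ then $\psi^{2}(x)$ would have to lie in each of the two arcs of $c\setminus\{x,\psi(x)\}$, a contradiction. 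Your harmonic-mean identity $1/\lambda_{+}+1/\lambda_{-}=1/\pi$ is in fact true (the paper mentions it as a remark), but it needs a different justification.
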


\begin{proof}
On a Finsler sphere $(M,F)=(S^n,F)$ with $n>1$ and $K\equiv1$,
there is a unique Clifford-Wolf translation $\psi$ such that
$d_F(x,\psi(x))=\pi$ \cite{BFIMZ2017}\cite{Sh1997}\cite{Xu2018-preprint1}. The action of $\psi$ preserves each
geodesic, so for any $x=c(0)$, $x'=\psi(x)$
and $x''=\psi^2(x)$ are also points on the geodesic $c(t)$.
We assume conversely that $\lambda_-\geq\lambda_+>2\pi$,
then $x''$ must appear in both connected components of the complement of $\{x,x'\}$ in the geodesic $c(t)$. This is a
contradiction.
\end{proof}

More careful argument can prove $1/\lambda_++1/\lambda_-=1/\pi$, which is not necessary
for later discussion.

For any $\epsilon\in[0,1/F(-V))$, we can use the navigation
datum $(F,\epsilon V)$ to define a new metric $\tilde{F}_\epsilon$.
The metric $\tilde{F}_\epsilon$ is also $G$-homogeneous
because $G$-actions fix $F$ as well as $V$. By Proposition
\ref{killing-navigation-proposition-1},
$\tilde{F}$ is a $G$-geodesic orbit metric. By Proposition
\ref{killing-navigation-proposition-2}, $\tilde{F}_\epsilon$
 has a constant flag curvature $K\equiv1$.

For $\tilde{F}_\epsilon$, $\pm V$ are still Killing vector fields of constant length, so $c(\pm t)$ are closed geodesics
for this new metric. By Proposition \ref{killing-navigation-proposition-1}, the length $\lambda(\epsilon)$
 of the prime closed geodesic $c(-t)$ for the metric $\tilde{F}_\epsilon$ is a smooth monotonous function,
 with
 $$\lambda(0)=\lambda_-,\mbox{ and }
 \lim_{\epsilon\rightarrow1/F(-V)}\lambda(\epsilon)=+\infty.$$
So we can find a suitable $\epsilon'$,
such that $\lambda(\epsilon')=2\pi$. We denote this
$\tilde{F}_\epsilon$ as $\tilde{F}$, and $\tilde{\psi}$
the antipodal map for $\tilde{F}$, then all integration curves of $-V$ provide prime closed geodesics of length $2\pi$, with respect to the metric $\tilde{F}$.
So $(M,\tilde{F})=(S^n,\tilde{F})$ is
a geodesic orbit Finsler sphere with $K\equiv1$ and $\tilde{\psi}^2=\mathrm{id}$.

Finally, we prove $\tilde{F}$ is Riemannian. Then by the navigation process, $F$ is Randers which ends the proof of this theorem. By Theorem \ref{reversible-constant-flag-curvature}, we only need to prove the following lemma.

\begin{lemma}
Let $(M,{F})=(S^n,{F})$ be
a geodesic orbit sphere with $n>1$, $K\equiv1$, and ${\psi}^2=\mathrm{id}$ where ${\psi}$ is the
antipodal map, then ${F}$ is reversible.
\end{lemma}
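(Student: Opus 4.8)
The plan is to deduce the pointwise identity $F(x,-y)=F(x,y)$ by comparing the forward and backward closed geodesics, exploiting that the geodesic orbit property makes the reverse speed \emph{constant} along every homogeneous geodesic. Fix a unit vector $y\in T_xM$. By Proposition \ref{prop-1}, the forward unit-speed geodesic $c$ with $\dot c(0)=y$ is an orbit $c(t)=\exp(tX)\cdot x$ of a Killing field $X\in\mathfrak{g}$ with $X(x)=y$. Since each $\exp(tX)$ is an isometry and $\dot c(t)=\exp(tX)_*y$, both $F(\dot c(t))\equiv F(y)=1$ and, crucially, $F(-\dot c(t))\equiv F(-y)$ are constant in $t$. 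Writing $\Gamma$ for the image circle and $\lambda_+$ for its forward prime length, the $F$-length of $\Gamma$ traversed once in the reverse orientation is $\int_0^{\lambda_+}F(-\dot c(t))\,dt=F(-y)\,\lambda_+$. Thus the whole question reduces to showing that this reverse length equals $\lambda_+$ for every homogeneous geodesic, which is exactly $F(-y)=F(y)$.

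Next I would pin the relevant lengths using $K\equiv 1$ and $\psi^2=\mathrm{id}$. The antipodal map $\psi$ is the unique Clifford--Wolf translation with $d_F(\cdot,\psi(\cdot))\equiv\pi$, and on $\Gamma$ it acts as the forward shift $\psi(c(t))=c(t+\pi)$; hence $\psi^2=\mathrm{id}$ forces $c(2\pi)=c(0)$, so $\lambda_+=2\pi$ for every geodesic. To control the reverse direction I would pass to the reverse metric $\overline{F}(x,v)=F(x,-v)$. Since the isometries of $F$ and $\overline{F}$ coincide and reversing orbit geodesics of $\exp(tX)$ gives orbit geodesics of $\exp(-tX)$, the metric $\overline{F}$ is again a geodesic orbit sphere with $K\equiv1$. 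As $\psi$ also preserves $\overline{F}$ and $d_{\overline F}(p,\psi(p))=d_F(\psi(p),p)=\pi$ (using $\psi^2=\mathrm{id}$), the uniqueness of the antipodal map shows $\psi$ is simultaneously the antipodal involution of $\overline{F}$; in particular the forward and backward conjugate points both sit at distance $\pi$. Combined with the constant-curvature relation $1/\lambda_++1/\lambda_-=1/\pi$ between the forward and backward prime lengths of one homogeneous geodesic, $\lambda_+=2\pi$ yields $\lambda_-=2\pi$, whence $F(-y)=\lambda_-/\lambda_+=1=F(y)$. As $y$ was arbitrary, $F$ is reversible, and Theorem \ref{reversible-constant-flag-curvature} then upgrades this to $F$ being Riemannian.

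The main obstacle is precisely this matching step, because a geodesic orbit Finsler space need not be geodesically reversible: the backward geodesic in direction $-y$ is a priori the orbit of a \emph{different} Killing field $X'$ with $X'(x)=-y$, sweeping out a different circle than $\Gamma$, so that its prime length $\lambda_-$ is not tautologically $F(-y)\lambda_+$. Reconciling the intrinsic backward geodesic with the reverse traversal of $\Gamma$ is where constant flag curvature must really be used. Concretely, I expect to exploit that the reversal $c(-t)$ is always an $\overline{F}$-geodesic (via Lemma \ref{lemma-0}, since $x$ is critical for $\overline{F}(-X(\cdot))=F(X(\cdot))$) together with the fact that $\psi$ is the common antipodal map, in order to identify the two backward circles up to the shared involution. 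Establishing the relation $1/\lambda_++1/\lambda_-=1/\pi$ rigorously for an \emph{arbitrary} homogeneous geodesic, rather than only for the central Killing field isolated in the navigation step, is the technical heart of the argument.
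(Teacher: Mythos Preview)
Your reduction to $\lambda_+=2\pi$ for every homogeneous geodesic is fine, and your observation that $\psi$ is simultaneously the antipodal involution of $\overline F$ is correct. The gap is exactly where you place it, and it is not closed by the argument you sketch. The relation $1/\lambda_++1/\lambda_-=1/\pi$ appears in the paper only for the integral curve of the \emph{central} Killing field $V$, where both orientations are $F$-geodesics because $\pm V$ have constant $F$-length; for a generic orbit $c(t)=\exp(tX)\cdot x$ there is no reason $x$ should be critical for $F(-X(\cdot))$, so $c(-t)$ need not be an $F$-geodesic and the symbol $\lambda_-$ has no independent meaning to feed into such a relation. Your appeal to $\overline F$ does not supply the missing constraint: an $\overline F$-geodesic is precisely a time-reversed $F$-geodesic with the \emph{same} arc-length parameter (since $\overline F(-\dot c)=F(\dot c)=1$), so ``every $\overline F$-geodesic has prime length $2\pi$'' is the same statement as $\lambda_+=2\pi$, not a second equation. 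You end up with two copies of the forward theory and nothing that forces the $F$-geodesic through $-y/F(-y)$ to trace the same circle $\Gamma$ as the one through $y$; without that identification the equation $F(-y)=\lambda_-/\lambda_+$ is not available.

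The paper's proof sidesteps this entirely and works with the distance function. Using the geodesic orbit property to carry a pair $(x_1,x_2)$ with $d_F(x_1,x_2)=\pi/k$ along a minimising geodesic to $\psi(x_1)$, a telescoping inequality forces $d_F(x_2,x_1)=d_F(x_1,x_2)$ whenever the forward distance is $\pi/k$; otherwise one contradicts $d_F(\psi(x_1),x_1)=\pi$. A bisection argument on a segment of length $\pi/2$ then shows the forward and backward minimisers share a dense set of points and hence coincide, giving geodesic reversibility and thus reversibility of $F$. This metric argument never needs $1/\lambda_++1/\lambda_-=1/\pi$ (which the paper mentions but does not use), nor any comparison with $\overline F$.
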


\begin{proof}
Denote $d_F(\cdot,\cdot)$ the distance function on $M$ defined by ${F}$. We claim

{\bf Claim 1.} $d_F(x_1,x_2)=d_F(x_2,x_1)$ when $d_F(x_1,x_2)=\pi/k$ for some $k\in\mathbb{N}$.

Because ${\psi}^2=\mathrm{id}$, when $k=1$, $\{x_1,x_2\}$
is a $\psi$-orbit, i.e. $d_F(x_2,x_1)=d_F(x_1,x_2)=\pi$. So in later discussion, we may assume $k>1$.

Assume conversely $d_F(x_1,x_2)\neq d_F(x_2,x_1)$. We first consider the possibility that $d_F(x_2,x_1)<d_F(x_1,x_2)=\pi/k$.

There is a unique minimizing geodesic $\gamma$ from $x_1$ to ${\psi}(x_1)$ passing $x_2$. There are a sequel of points
$x_i$'s for $3\leq i\leq k+1$ on $\gamma$, such that $d_F(x_i,x_{i+1})=\pi/k$ for $2\leq i\leq k$ and $x_{k+1}={\psi}(x_1)$.

Using
the geodesic orbit condition, we can find $g_i\in I_o(M,{F})$ for $2\leq i\leq k$ such that
$g_i(x_1)=x_i$ and $g_i(x_2)=x_{i+1}$. Then for $2\leq i\leq k$, we have
$$d_F(x_{i+1},x_i)=d_F(g_i(x_2),g_i(x_1))=d_F(x_2,x_1)<\pi/k,$$
and then
$$d_F({\psi}(x_1),x_1)=d_F(x_{k+1},x_1)\leq\sum_{i=1}^k d_F(x_{i+1},x_i)<\pi,$$
which is a contradiction to
$$d_F({\psi}(x_1),x_1)=
d_F({\psi}^2(x_1),{\psi}(x_1))
=d_F(x_1,x_2)=\pi.$$

So $d_F(x_2,x_1)<d_F(x_1,x_2)=\pi/k$ is impossible.

 Then we consider the possibility that $d_F(x_2,x_1)>d_F(x_1,x_2)=\pi/k$.

 On the minimizing geodesic segment $\gamma$ from $x_1$ to $x_2$, we can find a point $x'$, such that $d_F(x',x_1)=\pi/k$. Obviously $x'\neq x_2$, so $d_F(x_1,x')<d_F(x_1,x_2)=\pi/k$. So we have
 $d_F(x_1,x')<d_F(x',x_1)=\pi/k$, which is impossible by previous
 argument.

This ends the proof of the Claim 1.

Next we claim

{\bf Claim 2.} All geodesics are reversible.

To prove Claim 2, we only need to consider the unique minimizing geodesic segment $\gamma$ from $x_1$ to $x_2$
with $d_F(x_1,x_2)=\pi/2$. By Claim 1, $d_F(x_2,x_1)=\pi/2$,
so there exists another unique minimizing geodesic segment $\gamma'$
from $x_2$ to $x_1$.
On $\gamma$, there is a point
$x'$ with $d_F(x_1,x')=d_F(x',x_2)=\pi/4$. Then by Claim 1,
$d_F(x_2,x')=d_F(x',x_1)=\pi/4$, so $x'\in\gamma'$.
Repeating this argument, we see that the dense subset
$$\{x'|d_F(x_1,x')=\frac{p\pi}{2^q},\mbox{ for some }p,q\in\mathbb{N}\}\subset\gamma$$
is contained in $\gamma'$. So $\gamma\subset\gamma'$, and by
the same argument, $\gamma'\subset\gamma$, i.e. they are the same curve, which proves Claim 2.

The reversibility of ${F}$ follows Claim 1 and Claim 2
immediately, which ends the proof of this lemma.
\end{proof}

\end{document}